\documentclass[a4paper,twoside,final]{siamltex}
\parskip1ex

\usepackage{graphicx}
\usepackage{epstopdf}
\usepackage{amssymb,amsmath}
\usepackage{epsfig}
\usepackage{amsfonts}
\usepackage{latexsym}
\usepackage{graphicx}
\usepackage{subfigure}
\usepackage{color}
\usepackage{url}
\usepackage{framed}
\usepackage{srcltx} 

\setlength{\parindent}{0cm}

\def\eqref#1{{\rm (\ref{#1})}}
\newtheorem{remark}{Remark}[section]

\newcommand{\weak}{\rightharpoonup}
\newcommand{\E}{D}
\renewcommand{\k}{\kappa}

\newcommand{\escu}[2]{\left\langle #1\,,#2 \right\rangle}

\DeclareGraphicsRule{.tif}{png}{.png}{`convert #1 `dirname #1`/`basename #1 .tif`.png}
\bibliographystyle{plain}

\begin{document}
\title{Image denoising: learning noise distribution via PDE-constrained optimization}
\author{Juan Carlos De los Reyes\footnotemark[1] \and Carola-Bibiane Sch\"onlieb\footnotemark[2]}
\renewcommand{\thefootnote}{\fnsymbol{footnote}}

\footnotetext[1]{Departamento de Matem\'atica, Escuela Polit\'ecnica Nacional
de Quito, Ecuador (juan.delosreyes@epn.edu.ec)} 
\footnotetext[2]{Department of Applied Mathematics and Theoretical Physics (DAMTP), University of Cambridge (C.B.Schoenlieb@damtp.cam.ac.uk)}
\footnotetext[3]{Research partially supported by the Alexander von Humboldt Foundation. Moreover, CBS acknowledges the financial support provided by the Cambridge Centre for Analysis (CCA) and the Royal Society International Exchanges Award IE110314 for the project \emph{High-order Compressed Sensing for Medical Imaging}. Further, this publication is based on work supported by Award No. KUK-I1-007-43 , made by King Abdullah University of Science and Technology (KAUST).}
\footnotetext{\emph{Date: 14. July 2012}}
\renewcommand{\thefootnote}{\arabic{footnote}}
\maketitle

\begin{abstract}
We propose a PDE-constrained optimization approach for the determination of
noise distribution in total variation (TV) image denoising. An
optimization problem for the determination of the weights
correspondent to different types of noise distributions is stated and existence of an optimal solution is
proved. A tailored regularization approach for the approximation of the optimal
parameter values is proposed thereafter and its consistency
studied. Additionally, the differentiability of the solution operator
is proved and an optimality system characterizing the optimal
solutions of each regularized problem is derived. The optimal parameter values are numerically computed by using a
quasi-Newton method, together with semismooth Newton type algorithms for the solution of the TV-subproblems.
\end{abstract}

\begin{keywords}
Image denoising, noise distribution, PDE-constrained optimization, Huber regularization.
\end{keywords}

\section{Introduction}
Let $f\in L^p(\Omega)$, $p=1$ or $2$ with $\Omega\subset \mathbb{R}^2$, be a given noisy image. Depending on the application at hand the type of noise, i.e., the noise distribution, changes \cite{Bo00}. Examples for noise distributions are Gaussian noise, which typically appears in, e.g. MRI (Magnetic Resonance Tomography),  Poisson noise in, e.g. radar measurements or PET (Positron Emission Tomography), and impulse noise usually due to transmission errors or malfunctioning pixel elements in camera sensors. 
To remove the noise a total variation (TV) regularization is frequently considered \cite{AubVes,Cha,ChaLio,chcacrnopo10,DobVog,Vese} that amounts to reconstruct a denoised version $u$ of $f$ as a minimiser of the generic functional 
\begin{equation}\label{tvbasic}
\mathcal J(u) =   |Du|(\Omega)  + \lambda \phi(u,f),
\end{equation}
with 
\begin{equation}\label{totalvariation}
|Du|(\Omega)= \sup_{{\bf g} \in C_0^\infty(\Omega;\mathbb R^2), \|g\|_\infty \leq 1} \int_\Omega  u ~\nabla \cdot {\bf g} ~dx
\end{equation} 
the total variation of $u$ in $\Omega$, $\lambda$ a positive parameter and $\phi$ a suitable distance function called the data fidelity term. The latter depends on the statistics of the data $f$, which can be either estimated or approximated by a noise model known from the physics behind the acquisition of $f$. For normally distributed $f$, i.e. the interferences in $f$ are Gaussian noise, this distance function is the squared $L^2$ norm of $u-f$. 
If a Poisson noise distribution is present, $\phi(u,f) = \int_\Omega \lambda ~(u-f\log u)~ dx$, which corresponds to the Kullback-Leibler distance between $u$ and $f$ \cite{LCA07,SBMB09}. In the presence of impulse noise, the correct data fidelity term turns out to be the $L^1$ norm of $u-f$ \cite{Nik04,DAG09}. Other noise models have been considered as well, cf. e.g. \cite{AuAu08}. The size of the parameter $\lambda$ depends on the strength of the noise, i.e. it models the trade-off between regularisation and fidelity to the measured datum $f$.

A key issue in total variation denoising is an adequate choice of the correct noise model, i.e. the choice of $\phi$, and of the size of the parameter $\lambda$. Depending on this choice, different results are obtained. The term $\phi$ is usually modelled from the physics behind the acquisition process. Several strategies, both heuristic and statistically grounded, have been considered for choosing the weight $\lambda$, cf. e.g. \cite{Cha,FMM12a,FMM12b,FMM12c,HDR11,SAC05}. In this paper we propose an optimal control strategy for choosing both $\phi$ and $\lambda$. To do so we extend model \eqref{tvbasic} to 
a more general model, that allows for mixed noise distributions in the data. Namely, instead of \eqref{tvbasic} we consider
\begin{equation}\label{general denoise}
\min_u {\left( |Du|(\Omega) + \sum_{i=1}^d  \int_\Omega \lambda_i~ \phi_i (u,f)\; dx \right)}.
\end{equation}
where $\phi_i$, $i=1,\dots,d,$ are convex differentiable functions in $u$, and $\lambda_i$ are positive parameters. The functions $\phi_i$ model different choices of data fidelities. In the case of mixed Gaussian and impulse noise $d=2$, $\phi_1(u,f)=\|u-f\|_{L^2(\Omega)}^2$ and $\phi_2(u,f) = \|u-f\|_{L^1(\Omega)}$. The parameters $\lambda_i$ weight the different noise models $\phi_i$ and the regularising term against each other. As such, the choice of these parameters depends on the amount and strength of noise of different distributions in $f$. Typically, the $\lambda_i$ are chosen to be real parameters. However, in some applications, it may be more favourable to choose them to be spatially dependent functions $\lambda_i:\Omega\rightarrow \mathbb{R}^+$, cf. e.g. \cite{ABCH08,BCRS03,FMM12b,HDR11,SAC05}.

We propose a PDE-constrained optimization approach to determine the weights $\lambda_i$ of the noise distribution and, in that manner, learn the noise distribution present in the measured datum $f$ for both $d=1$ and mixed noise models $d>1$. To do so, we treat \eqref{general denoise} as a constraint and state an optimization problem governed by \eqref{general denoise} for the optimal determination of weights. When possible, we replace the optimization problem by a necessary and sufficient optimality condition (in form of a variational inequality (VI)) as a constraint.

Schematically, we proceed in the following way:
\begin{framed}
\begin{enumerate}
\item We consider a training set of pairs $(f_k,u_k)$, $k=1,2,\ldots, N$. Here, $f_k$'s are noisy images, which have been measured with a fixed device with fixed settings, and the images $u_k$ represent the ground truth or images that approximate the ground truth within a desirable tolerance. 
\item We determine the optimal choice of functions $\lambda_i$ by solving the following problem for $k=1,2,\ldots, N$
\begin{equation}\label{optimalcontrolbasic}
\min_{\lambda_i \geq 0, ~{i=1,...,d}} ~ \| \tilde{u} - u_k\|_{L^2(\Omega)}^2 + \beta \sum_{i=1}^d \|\lambda_i\|_{X}^2,
\end{equation}
where $\tilde{u}$ solves the minimization problem \eqref{general denoise} for a given $f_k$, $X$ corresponds to $\mathbb R$ in the case of scalar parameters or to, e.g., $L^2(\Omega)$ in the case of distributed functions, and $0<\beta \ll1$ is a given weight.
\end{enumerate}
\end{framed}
The reasonability of assuming to have a such a training set is motivated by certain applications, where the accuracy and as such the noise level in the measurements can be tuned to a certain extent. In MRI or PET, for example, the accuracy of the measurements depends on the setup of the experiment, e.g., the acquisition time. Hence, such a training set can be provided by a series of measurements using phantoms. Then, the $u_k$'s are measured with the maximal accuracy practically possible and the $f_k$'s are measured within a usual clinical setup. For instance, dictionary based image reconstruction methods are already used in the medical imaging community. There, good quality measurements or template shapes are used as priors for reconstructing $\tilde u$, cf. e.g. \cite{TJFVD10}, or for image segmentation, cf. e.g. \cite{SC09} and references therein. 

Up to our knowledge this paper is the first one to approach the estimation of the noise distribution as an optimal control problem. By incorporating more than one $\phi_i$ into the model \eqref{general denoise} our approach automatically chooses the correct one(s) through an optimal choice of the weights $\lambda_i$ in terms of \eqref{optimalcontrolbasic}.

\paragraph*{Organisation of the paper:} We continue with the analysis of the optimization problem \eqref{eq:optimization problem1}--\eqref{Jminprob} in Section \ref{sec:bvanalysis}. After proving existence of an optimal solution and convergence of the Huber-regularized minimisers to a minimiser of the original total variation problem, the optimization problem is transferred to a Hilbert space setting where the rest of our analysis takes place in Section \ref{sec:h1analysis}. This further smoothing of the regularizer turns out to be necessary in order to prove continuity of the solution map in a strong enough topology and to verify convergence of our procedure. Moreover, differentiability of the regularized solution operator is thereafter proved, which leads to a first order optimality system characterization of the regularized minimisers. The paper ends with three detailed numerical experiments where the suitability of our approach is computationally verified.

\section{Optimization problem in $BV(\Omega)$}\label{sec:bvanalysis}
We are interested in the solution of the following bilevel optimization problem
\begin{subequations} \label{eq:optimizationproblem:org}
\begin{equation}
\min_{\lambda_i \geq 0, ~{i=1,...,d}} ~ g(\tilde{u}) + \beta \sum_{i=1}^d \|\lambda_i\|_{X}^2
\end{equation}
subject to
\begin{multline}
\tilde{u} = \mathrm{argmin}_{u\in BV\cap \mathcal A} \left\{\mathcal J (u) =  |Du|(\Omega) + \sum_{i=1}^d \int_\Omega \lambda_i \phi_i(u,f)~ dx \right\},\label{Jminprob:org}
\end{multline}
\end{subequations}
where the space $X$ corresponds to $\mathbb R$ in the case of scalar parameters or to a function space such that $X \hookrightarrow L^2(\Omega)$ (where $\hookrightarrow$ stands for continuous injection) in the case of distributed functions, $g: L^2(\Omega) \mapsto \mathbb R$ is a $C^1$ functional to be minimised and $\beta>0$. The admissible set of functions $\mathcal A$ is chosen according to the data fidelities $\phi_i$. In particular, $BV(\Omega) \cap \mathcal A$ restricts the set of $BV$ functions on $\Omega$ to those for which the $\phi_i$'s are well defined, cf. examples below. Moreover, we assume that the functions $\phi_i$ are differentiable and convex in $u$, are bounded from below, and fulfil the following coercivity assumption
\begin{equation}\label{phicoercive}
\int_\Omega \phi_i(u,f) ~ dx\geq C_1 \|u\|_{L^p}^p - C_2,\quad \forall u\in L^p(\Omega)\cap \mathcal A
\end{equation}
for nonnegative constants $C_1,C_2$ and at least one $p=1$ or $p=2$. Examples of $\phi_i$'s that fulfill these assumptions and that are considered in the paper are
\begin{itemize}
\item The Gaussian noise model, where $\int_\Omega \phi(u,f) ~dx= \|u-f\|_{L^2(\Omega)}^2$ fulfills the coercivity constraint for $p=2$ and the admissible set $\mathcal A=L^2(\Omega)$.
\item The Poisson noise model, where $\phi(u,f) =u-f\log u$ and $\mathcal A=\{u\in L^1(\Omega)| ~ u\geq 0\}$. This $\phi$ is convex and differentiable and fulfils the coercivity condition for $p=1$. More precisely, we have for $u\geq 0$
$$
\int_\Omega (u-f\log u)~ dx \geq \|u\|_{L^1(\Omega)} - \|f\|_{L^\infty(\Omega)}\cdot \log \|u\|_{L^1(\Omega)},
$$
where we have used Jensen's inequality, i.e., for $u\geq 0$
$$
\log \left(\int_\Omega u~ dx \right) \geq \int_\Omega \log u~ dx.
$$
\item The impulse noise model, where $\int_\Omega \phi(u,f) ~dx = \|u-f\|_{L^1(\Omega)}$ fulfills the coercivity constraint for $p=1$.
\end{itemize}

For the numerical solution of \eqref{general denoise} we want to use derivative-based iterative methods. To do so, the gradient of the total variation denoising model has to be uniquely defined. That is, a minimiser of \eqref{general denoise} is uniquely characterised by the solution of the corresponding Euler-Lagrange equation. Since the total variation regulariser is not differentiable but its ''derivative'' can be only characterised by a set of subgradients (the subdifferential), we (from now on) shall use a regularised version of the total variation. More precisely, we consider for $\gamma\gg 1$ the Huber-type regularisation of the total variation with
\begin{equation}\label{eq:huber}
|\nabla u|_\gamma = \begin{cases}
|\nabla u|-\frac{1}{2\gamma} & \textrm{if } |\nabla u|\geq \frac{1}{\gamma}\\
|\nabla u|^2 \frac{\gamma}{2} & \textrm{if } |\nabla u| < \frac{1}{\gamma}
\end{cases}
\end{equation}
and the following regularised version of \eqref{eq:optimizationproblem:org}-\eqref{Jminprob:org}
\begin{subequations} \label{eq:optimization problem1}
\begin{equation}
\min_{\lambda_i \geq 0, ~{i=1,...,d}} ~ g(\tilde{u}) + \beta \sum_{i=1}^d \|\lambda_i\|_{X}^2
\end{equation}
subject to
\begin{multline}
\tilde{u} = \mathrm{argmin}_{u\in W^{1,1}\cap \mathcal A} \left\{\mathcal J^\gamma(u) =  \int_\Omega |\nabla u|_\gamma~ dx + \sum_{i=1}^d \int_\Omega \lambda_i \phi_i(u,f)~ dx \right\},\label{Jminprob}
\end{multline}
\end{subequations}
where the space $X$, $g$, $\phi_i$'s and $\beta>0$ are defined as before. The admissible set of functions $\mathcal A$ is assumed to be convex and closed subset of $W^{1,1}(\Omega)$ and is chosen according to the data fidelities $\phi_i$, cf. examples above.  The existence of an optimal solution for \eqref{Jminprob} is proven by the method of relaxation. To do so we extend the definition of $\mathcal J^\gamma$ to $BV(\Omega)$ as
$$
\mathcal J^\gamma_{ext}(u) = \begin{cases}
\mathcal J^\gamma(u) & u\in W^{1,1}(\Omega)\cap \mathcal A\\
+\infty & u\in BV(\Omega)\setminus (W^{1,1}\cap \mathcal A)
\end{cases}
$$
and prove the existence of a minimiser for the lower-semicontinuous envelope of $\mathcal J^\gamma_{ext}$ as follows. We have the following existence result.
\begin{theorem}\label{Jrelaxexist}
Let $f\in L^2(\Omega)$ and $\lambda_i\geq 0$ fixed. Then there exists a unique solution $u\in BV(\Omega)\cap \mathcal A$ of the minimisation problem
$$
\min_{u\in BV(\Omega)\cap \mathcal A} \mathcal J^\gamma_{relax}(u),
$$
where
\begin{equation}\label{Jrelaxedfunc}
\mathcal J^\gamma_{relax}(u) = \int_\Omega |\nabla u|_\gamma~ dx + C\int_\Omega |D_s u| + \sum_{i=1}^d \int_\Omega \lambda_i \phi_i(u,f)~ dx.
\end{equation}
is the relaxed functional of $\mathcal J^\gamma_{ext}$ on $BV-w*$. 
\end{theorem}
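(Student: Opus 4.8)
The plan is to prove the statement in three stages: (i) identify $\mathcal J^\gamma_{relax}$ of \eqref{Jrelaxedfunc} with the lower semicontinuous envelope of $\mathcal J^\gamma_{ext}$ with respect to weak-$*$ convergence in $BV(\Omega)$; (ii) deduce the existence of a minimiser by the direct method, combining the coercivity hypothesis \eqref{phicoercive} with a lower bound for the Huber integrand; (iii) obtain uniqueness from strict convexity.

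For stage (i) I would set $\psi_\gamma(p):=|p|_\gamma$ for $p\in\mathbb R^2$. From \eqref{eq:huber} one checks directly that $\psi_\gamma$ is convex, of class $C^1$, of linear growth with $|p|-\tfrac1{2\gamma}\le\psi_\gamma(p)\le|p|$, and has recession function $\psi_\gamma^\infty(p)=\lim_{t\to+\infty}\psi_\gamma(tp)/t=|p|$. The classical relaxation theorem for integral functionals of linear growth on $BV$ (Goffman--Serrin, Anzellotti--Giaquinta; see also Ambrosio--Fusco--Pallara and Demengel--Temam) then yields that the $L^1(\Omega)$-relaxation of $u\mapsto\int_\Omega\psi_\gamma(\nabla u)\,dx$, defined on $W^{1,1}(\Omega)$ and set to $+\infty$ elsewhere on $BV(\Omega)$, equals $u\mapsto\int_\Omega\psi_\gamma(\nabla u)\,dx+\int_\Omega\psi_\gamma^\infty\!\big(\tfrac{dD_su}{d|D_su|}\big)\,d|D_su|=\int_\Omega|\nabla u|_\gamma\,dx+|D_su|(\Omega)$, which in particular forces $C=1$. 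Since any sequence of bounded energy is bounded in $BV(\Omega)$ (by the Huber bound of stage (ii)), the $L^1$- and the weak-$*$-$BV$-relaxations coincide here. The data term $u\mapsto\sum_i\int_\Omega\lambda_i\phi_i(u,f)\,dx$ needs no relaxation: it is convex, bounded below and lower semicontinuous along weak-$*$ convergent $BV$-sequences (extract an a.e.\ convergent subsequence and apply Fatou), while a recovery sequence for the Huber part can be taken to be a family of mollifications of $u$, which additionally converges strongly in $L^p(\Omega)$, so that continuity of the Nemytskii operators associated with the $\phi_i$ transfers the data values along it. Finally, for the admissible sets considered $BV(\Omega)\cap\mathcal A$ is closed under weak-$*$ convergence (e.g.\ $\{u\ge0\}$ is preserved under a.e.\ limits, $L^2(\Omega)$ trivially) and mollification preserves these constraints, so the recovery sequence stays admissible; hence $\mathcal J^\gamma_{relax}$ coincides with the relaxation of $\mathcal J^\gamma_{ext}$ on $BV$-$w*$.

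For stage (ii), the elementary inequality $\tfrac\gamma2 t^2\ge t-\tfrac1{2\gamma}$ for $t\ge0$ gives $|p|_\gamma\ge|p|-\tfrac1{2\gamma}$ for all $p$, whence $\int_\Omega|\nabla u|_\gamma\,dx+|D_su|(\Omega)\ge|Du|(\Omega)-\tfrac{|\Omega|}{2\gamma}$. Taking a minimising sequence $(u_n)$ for $\mathcal J^\gamma_{relax}$, the bound on the energy together with this inequality gives a uniform bound on $|Du_n|(\Omega)$, while \eqref{phicoercive} (applied to an index $i$ with $\lambda_i>0$ and $C_1>0$) gives a uniform bound on $\|u_n\|_{L^p(\Omega)}$ and hence on $\|u_n\|_{L^1(\Omega)}$; thus $(u_n)$ is bounded in $BV(\Omega)$ and admits a subsequence converging weakly-$*$ to some $u$. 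Then $u\in BV(\Omega)\cap\mathcal A$ by closedness of the admissible set, and the lower semicontinuity of $\mathcal J^\gamma_{relax}$ (it is a relaxed functional) gives $\mathcal J^\gamma_{relax}(u)\le\liminf_n\mathcal J^\gamma_{relax}(u_n)$, so $u$ is a minimiser. For stage (iii), $\mathcal J^\gamma_{relax}$ is convex, being the relaxation of the convex $\mathcal J^\gamma_{ext}$ (or directly from \eqref{Jrelaxedfunc}); since the $L^2$ data term is retained unchanged in the relaxation and is strictly convex in $u$, $\mathcal J^\gamma_{relax}$ is strictly convex in the models of interest, and the minimiser is therefore unique.

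The main obstacle is stage (i): invoking, or re-proving, the linear-growth relaxation theorem on $BV$ \emph{in the presence of the convex constraint set} $\mathcal A$ and of the data terms, i.e.\ exhibiting, for a given $u\in BV(\Omega)\cap\mathcal A$, an approximating sequence in $W^{1,1}(\Omega)\cap\mathcal A$ that simultaneously recovers the Huber energy $\int_\Omega|\nabla u|_\gamma\,dx+|D_su|(\Omega)$, converges strongly in $L^p(\Omega)$ so as to recover the data values, and remains admissible. The remaining ingredients — the Huber bounds, weak-$*$ compactness in $BV$, closedness of $\mathcal A$, and strict convexity — are routine.
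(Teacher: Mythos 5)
Your proposal is correct and follows essentially the same route as the paper: existence by the direct method (the Huber lower bound plus the coercivity assumption \eqref{phicoercive} give a uniform $BV$ bound, then weak-$*$ compactness, lower semicontinuity, and weak closedness of the convex set $\mathcal A$ via Mazur), with the identification of $\mathcal J^\gamma_{relax}$ as the relaxed functional delegated to the classical linear-growth relaxation literature --- which is exactly what the paper does by citing Demengel--Temam and Bouchitt\'e--Buttazzo. Two differences are worth recording. First, for lower semicontinuity along the minimising sequence the paper does not use the relaxation property itself: it invokes the Demengel--Temam lower semicontinuity of $u\mapsto\int_\Omega|\nabla u|_\gamma\,dx+C\int_\Omega|D_s u|$ with respect to strong $L^1$ convergence and handles the data terms by pointwise a.e.\ convergence plus Fatou, whereas you obtain lower semicontinuity as an automatic property of the weak-$*$ relaxed functional; both work, but your version leans more heavily on stage (i), which you yourself flag as the delicate point (constructing recovery sequences in $W^{1,1}(\Omega)\cap\mathcal A$ compatible with the data terms), and which the paper also only resolves by citation. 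Second, you address the uniqueness claim via strict convexity, a step the paper's proof omits entirely; note, though, that your argument only applies when some $\lambda_i>0$ multiplies a strictly convex fidelity (e.g.\ the $L^2$ term), since neither the Huber integrand nor the singular or $L^1$ parts are strictly convex, so the blanket uniqueness assertion of the theorem is not fully recovered by your hedged argument (nor by the paper). Your observation that the recession function of $|\cdot|_\gamma$ forces $C=1$ is a correct sharpening of the statement, where the paper leaves $C$ unspecified.
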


\begin{remark}
Note that 
$$
\mathcal J^\gamma_{relax}(u) \leq \mathcal J^\gamma_{ext}(u), \quad u\in BV(\Omega)
$$
and $\mathcal J^\gamma_{relax}(u)=\mathcal J^\gamma_{ext}(u)$ for $u\in W^{1,1}(\Omega)\cap \mathcal A$. Moreover, the relaxation result from Theorem \ref{Jrelaxexist} means that
$$
\mathcal J^\gamma_{relax}(u) = \inf\left\{\liminf_n \mathcal J^\gamma_{ext}(u_n):\, u_n\in BV(\Omega),\, u_n\rightarrow u \textrm{ in } BV-w* \right\},
$$
i.e. $\mathcal J^\gamma_{relax}$ is the greatest $BV-w*$ lower semicontinuous functional less than or equal to $\mathcal J^\gamma_{ext}$.
\end{remark}

\begin{proof}
Let $u_n$ be a minimising sequence for $\mathcal J^\gamma_{relax}$. We start by stating the fact that $|\cdot|_\gamma$ is coercive and at most linear. That is
\begin{align*}
& \textrm{For } |x|\geq \frac{1}{\gamma}: \; A |x|-B \leq |x|_\gamma = |x|-\frac{1}{2\gamma} \leq |x| + 1\\
& \textrm{For } |x|<\frac{1}{\gamma}<1: \; A |x|-B \leq  = |x|_\gamma = |x|^2\frac{\gamma}{2} < |x|\frac{\gamma}{2}.
\end{align*}
Hence,
\begin{multline*}
|Du_n|(\Omega) = \int_\Omega |\nabla u_n| ~ dx + \int_\Omega |D_s u| \leq  \int_\Omega |\nabla u|_\gamma~ dx + C\int_\Omega |D_s u| \leq M, \quad \forall n\geq 1.
\end{multline*}
Moreover, $u_n$ is uniformly bounded in $L^p(\Omega)$ for $p=1$ or $p=2$ because of the coercivity assumption \eqref{phicoercive} on $\phi_i$ and therefore $u_n$ is uniformly bounded in $BV(\Omega)$. Because $BV(\Omega)$ can be compactly embedded in $L^1(\Omega)$ this gives that $u_n$ converges weak $*$ to a function $u$ in $BV(\Omega)$ and (by passing to another subsequence) strongly converges in $L^1(\Omega)$. From the convergence in $L^1(\Omega)$, $\Omega$ bounded, we get that $u_n$ (up to a subsequence) converges pointwise a.e. in $\Omega$. Moreover, since $\phi_i$ is continuous, also $\phi_i(u_n,f)$ converges pointwise to $\phi_i(u,f)$. 
Then, lower-semicontinuity of $R(|Du|) = \int_\Omega |\nabla u|_\gamma~ dx + C\int_\Omega |D_s u|$ w.r.t. strong convergence in $L^1$ \cite{DeTe84} and Fatou's lemma together with pointwise convergence applied to $\int_\Omega \phi_i(u_n,f)~dx$ gives that
\begin{multline*}
\mathcal J^\gamma_{relax}(u) =  \int_\Omega |\nabla u|_\gamma~ dx + C\int_\Omega |D_s u| + \sum_{i=1}^d \int_\Omega \lambda_i \phi_i(u,f)~ dx \\
\leq \liminf_n \mathcal J^\gamma_{relax}(u_n) = \liminf_n \left(\int_\Omega |\nabla u_n|_\gamma~ dx + C\int_\Omega |D_s u_n| + \sum_{i=1}^d \int_\Omega \lambda_i \phi_i(u_n,f)~ dx \right).
\end{multline*}
To see that the minimiser lies in the admissible set $\mathcal A$ it is enough to observe that the set $\mathcal A$ is a convex and closed subset of $BV(\Omega)$ and hence it is weakly closed by Mazur's Theorem. This gives that $u\in\mathcal A$.
To see that in fact $\mathcal J^\gamma_{relax}$ is the greatest lower-semicontinuous envelope of $\mathcal J^\gamma_{ext}$ see \cite{DeTe84,BB90,BB92,BB93,BBB95}.
\end{proof}

\begin{theorem}
There exists an optimal solution to 
\begin{subequations} \label{eq:optimization problem2}
\begin{equation}
\min_{\lambda_i \geq 0, ~{i=1,...,d}} ~ g(\tilde{u}) + \beta \sum_{i=1}^d \|\lambda_i\|_{X}^2
\end{equation}
subject to
\begin{equation}
\tilde{u} = \mathrm{argmin}_{u\in BV(\Omega)\cap \mathcal A} \mathcal J^\gamma_{relax}(u).
\end{equation}
\end{subequations}
\end{theorem}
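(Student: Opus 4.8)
The plan is to apply the direct method of the calculus of variations to the bilevel problem itself, using Theorem~\ref{Jrelaxexist} to make sense of the lower level; I write $\mathcal J^\gamma_{relax}(u;\lambda)$ to display the dependence on the weights. Let $\lambda^n=(\lambda_1^n,\dots,\lambda_d^n)$ be a minimising sequence for the outer problem and let $\tilde u^n\in BV(\Omega)\cap\mathcal A$ be the associated state, i.e. the unique minimiser of $\mathcal J^\gamma_{relax}(\cdot\,;\lambda^n)$ provided by Theorem~\ref{Jrelaxexist}. Assuming $g$ is bounded below over the relevant range (as for the tracking functional in \eqref{optimalcontrolbasic}), the outer values are bounded, hence so is the penalty $\beta\sum_i\|\lambda_i^n\|_X^2$, so each $(\lambda_i^n)_n$ is bounded in $X$. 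Since $X=\mathbb R$ or $X$ is a Hilbert space with $X\hookrightarrow L^2(\Omega)$, we extract a subsequence (not relabelled) with $\lambda_i^n\weak\lambda_i^*$ in $X$ for every $i$; as the cone $\{\lambda\ge 0\}$ is convex and closed, hence weakly closed, the limit satisfies $\lambda_i^*\ge 0$.

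Next I would bound the states uniformly in $BV(\Omega)$. Fix a competitor $v\in BV(\Omega)\cap\mathcal A$ — e.g. a suitable positive constant, for which $\int_\Omega|\nabla v|_\gamma\,dx=0$ and all $\int_\Omega\phi_i(v,f)\,dx$ are finite because $f\in L^2(\Omega)\subset L^1(\Omega)$. Optimality of $\tilde u^n$ gives $\mathcal J^\gamma_{relax}(\tilde u^n;\lambda^n)\le\mathcal J^\gamma_{relax}(v;\lambda^n)$, and the right-hand side is bounded uniformly in $n$ since the $\lambda_i^n$ are bounded. As in the proof of Theorem~\ref{Jrelaxexist}, coercivity of $|\cdot|_\gamma$ together with \eqref{phicoercive} then bounds $\tilde u^n$ uniformly in $BV(\Omega)$. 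By compactness of $BV(\Omega)\hookrightarrow L^1(\Omega)$ we obtain, along a further subsequence, $\tilde u^n\weak\tilde u^*$ weak-$*$ in $BV(\Omega)$, $\tilde u^n\to\tilde u^*$ strongly in $L^1(\Omega)$ and a.e., and hence, by continuity of $\phi_i$, also $\phi_i(\tilde u^n,f)\to\phi_i(\tilde u^*,f)$ a.e.

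The crucial step is to show that $\tilde u^*$ is the lower-level minimiser for $\lambda^*$. For arbitrary $v\in BV(\Omega)\cap\mathcal A$ I would establish
\[
\mathcal J^\gamma_{relax}(\tilde u^*;\lambda^*)\ \le\ \liminf_n \mathcal J^\gamma_{relax}(\tilde u^n;\lambda^n)\ \le\ \liminf_n \mathcal J^\gamma_{relax}(v;\lambda^n)\ =\ \mathcal J^\gamma_{relax}(v;\lambda^*).
\]
The middle inequality is optimality of $\tilde u^n$; the last identity holds because, for fixed $v$, the map $\lambda\mapsto\sum_i\lambda_i\int_\Omega\phi_i(v,f)\,dx$ is a continuous linear functional on $X^d\hookrightarrow L^2(\Omega)^d$, hence weakly continuous. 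For the first inequality I would split $\mathcal J^\gamma_{relax}$: the regulariser $R(u)=\int_\Omega|\nabla u|_\gamma\,dx+C\int_\Omega|D_su|$ is lower semicontinuous for strong $L^1$ convergence (as used in Theorem~\ref{Jrelaxexist}), so it remains to show, for each $i$,
\[
\liminf_n\int_\Omega\lambda_i^n\,\phi_i(\tilde u^n,f)\,dx\ \ge\ \int_\Omega\lambda_i^*\,\phi_i(\tilde u^*,f)\,dx .
\]
This is the main obstacle: one factor converges only weakly in $L^2(\Omega)$, the other only pointwise a.e. I would reduce to nonnegative integrands by subtracting the lower bound of $\phi_i$, and then combine $\lambda_i^n\weak\lambda_i^*$ (with $\lambda_i^n\ge0$) with the a.e. convergence of the (now nonnegative) $\phi_i(\tilde u^n,f)$ via a truncation argument: truncating $\phi_i$ at level $M$ makes the corresponding integrand converge strongly in $L^2(\Omega)$ (dominated convergence on the bounded set $\Omega$), so the product passes to the limit; letting $M\to\infty$ and invoking monotone convergence yields the inequality (and, en passant, $\phi_i(\tilde u^*,f)\in L^1(\Omega)$). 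In the scalar case $X=\mathbb R$ this is just Fatou's lemma. Once the displayed chain holds, $\tilde u^*$ minimises $\mathcal J^\gamma_{relax}(\cdot\,;\lambda^*)$ over $BV(\Omega)\cap\mathcal A$, and by the uniqueness part of Theorem~\ref{Jrelaxexist} it is precisely the state associated with $\lambda^*$; thus $(\lambda^*,\tilde u^*)$ is feasible.

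Finally, weak lower semicontinuity of $\|\cdot\|_X^2$ and lower semicontinuity of $g$ along $\tilde u^n$ — which is bounded also in $L^2(\Omega)$ through $BV(\Omega)\hookrightarrow L^2(\Omega)$ and converges weakly there, so that weak-$L^2$ lower semicontinuity of $g$ (valid for the fidelity-type $g$ of \eqref{optimalcontrolbasic}) applies — give
\[
g(\tilde u^*)+\beta\sum_{i=1}^d\|\lambda_i^*\|_X^2\ \le\ \liminf_n\Big(g(\tilde u^n)+\beta\sum_{i=1}^d\|\lambda_i^n\|_X^2\Big),
\]
and the right-hand side equals the infimum. Hence $(\lambda^*,\tilde u^*)$ is an optimal solution.
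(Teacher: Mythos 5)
Your proposal is correct and follows essentially the same direct-method argument as the paper: a minimising sequence $\lambda^n$ bounded via the Tikhonov term, uniform $BV$ bounds on the states from optimality against a fixed competitor plus \eqref{phicoercive}, weak-$*$/$L^1$ compactness, lower semicontinuity of the regulariser and a Fatou-type argument for the fidelity terms, and weak lower semicontinuity of the cost. If anything, you are more careful than the paper at two points it treats tersely: you verify feasibility of the limit (that $\tilde u^*$ is the minimiser for $\lambda^*$) by testing against arbitrary competitors and invoking uniqueness from Theorem \ref{Jrelaxexist}, and you handle the weakly-converging $\lambda_i^n$ times a.e.-converging $\phi_i(\tilde u^n,f)$ product via truncation, besides replacing the paper's competitor $u=0$ (problematic for the Poisson fidelity) by a positive constant.
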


\begin{proof}
Since the cost functional is bounded from below, there exists a minimizing sequence $\{ \lambda_n \} = \{\lambda_n(u_n)\} \subset X^d$. Due to the Tikhonov term in the cost functional, we get that $\{ \lambda_n \}$ is bounded in $X^d$. Let $u_n$ be a minimiser of $\mathcal J^\gamma_{relax}$ for a corresponding $\lambda_n$. Such a minimiser exists because of Theorem \ref{Jrelaxexist}. Hence,
\begin{align*}
\mathcal J^\gamma_{relax}(u_n) & \leq \mathcal J^\gamma_{relax}(0)\\
\int_\Omega |\nabla u_n|_\gamma~ dx + C\int_\Omega |D_s u_n| + \sum_{i=1}^d \int_\Omega \left(\lambda_i\right)_n \phi_i(u_n,f)~ dx & \leq \sum_{i=1}^d \int_\Omega \left(\lambda_i\right)_n \phi_i(0,f)~ dx
\end{align*}
As before, from the coercivity condition on $f$ and the uniform bound on $\lambda_n$, we deduce that
\begin{align*}
C |D u_n|(\Omega) \leq C |D u_n|(\Omega) + \sum_{i=1}^d \int_\Omega \left(\lambda_i\right)_n \phi_i(u_n,f)~ dx &\leq \sum_{i=1}^d \int_\Omega \left(\lambda_i\right)_n \phi_i(0,f)~ dx\\
& \leq \frac{1}{2} \left(\sum_{i=1}^d \|(\lambda_i)_n\|_X +  \|\phi_i(0,f)\|_{L^2}^2\right)\\
& \leq C.
\end{align*}
Moreover, from the coercivity of $\phi_i$ in $u_n$ we get with a similar calculation that $u_n$ is uniformly bounded in $L^p$ for $p=1$ or $2$, and hence in particular in $L^1$. In sum, $u_n$ is uniformly bounded in $BV(\Omega)$ and hence, converges weakly $*$ in $BV(\Omega)$ and strongly in $L^1(\Omega)$.   The latter also gives pointwise convergence of $u_n$ and consequently $\phi_i(u_n,f)$ a.e. and hence we have
\begin{multline*}
\mathcal J^\gamma_{relax}(\hat{u},\hat{\lambda}) \\
:=  \int_\Omega |\nabla \hat{u}|_\gamma~ dx + C\int_\Omega |D_s \hat{u}| + \sum_{i=1}^d \int_\Omega \hat{\lambda}_i \phi_i(\hat{u},f)~ dx \\
\leq \liminf_n \left(\int_\Omega |\nabla u_n|_\gamma~ dx + C\int_\Omega |D_s u_n| + \sum_{i=1}^d \int_\Omega \left(\lambda_i\right)_n \phi_i(u_n,f)~ dx \right)\\
=  \liminf_n \mathcal J^\gamma_{relax}(u_n,\lambda_n). 
\end{multline*}
Since the cost functional is w.l.s.c., it follows, together with the fact that $\{ \lambda: \lambda \geq 0 \}$ is weakly closed, that $(\hat{\lambda},\hat{u})\in (X\cap\{\lambda_i\geq 0\}\times (BV(\Omega)\cap\mathcal A)$ is optimal for \eqref{eq:optimization problem1}.

\end{proof}

\begin{theorem}\label{thmgammaconv}
The sequence of functionals $\mathcal J^\gamma_{relax}$ in \eqref{Jrelaxedfunc} converges in the $\Gamma$- sense to the functional
$$
\mathcal J_{relax}(u) = 
\begin{cases}
\int_\Omega |\nabla u| + \sum_{i=1}^d \int_\Omega \lambda_i \phi_i(u)~ dx & u\in W^{1,1}(\Omega)\cap \mathcal A\\
+\infty & u\in BV(\Omega) \setminus (W^{1,1}(\Omega)\cap \mathcal A)
\end{cases}
$$
as $\gamma\rightarrow\infty$. Therefore, the unique minimiser of $\mathcal J^\gamma_{relax}$ converges to the unique minimiser of $\mathcal J_{relax}$ as $\gamma$ goes to infinity.
\end{theorem}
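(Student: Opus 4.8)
The plan is to establish the statement directly from the sequential characterisation of $\Gamma$-convergence with respect to the weak-$*$ topology of $BV(\Omega)$, which on energy-bounded sequences coincides with strong convergence in $L^1(\Omega)$. The first ingredient I would isolate is a short list of elementary properties of the Huber function \eqref{eq:huber}: $x\mapsto|x|_\gamma$ is convex, $0\le|x|_\gamma\le|x|$, it is non-decreasing in $\gamma$ with $|x|_\gamma\uparrow|x|$ as $\gamma\to\infty$ for every $x$, and for $\gamma\ge1$ it satisfies the uniform coercivity bound $|x|_\gamma\ge|x|-\tfrac12$. All of this follows from the two-case definition and an inspection of the break-even value $|x|=1/\gamma$, and it will be used both in the $\liminf$ inequality and in the equi-coercivity needed to pass to the limit in the minimisers.

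For the $\Gamma$-$\limsup$ inequality I would use the constant recovery sequence. If $u\notin W^{1,1}(\Omega)\cap\mathcal A$ then $\mathcal J_{relax}(u)=+\infty$ and any sequence works; if $u\in W^{1,1}(\Omega)\cap\mathcal A$, take $u_\gamma\equiv u$. Then $D_su=0$, the fidelity terms are independent of $\gamma$, and $\int_\Omega|\nabla u|_\gamma\,dx\to\int_\Omega|\nabla u|\,dx$ by monotone convergence using the properties above, so that in fact $\mathcal J^\gamma_{relax}(u_\gamma)\to\mathcal J_{relax}(u)$, which is stronger than the required inequality.

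The $\Gamma$-$\liminf$ inequality is the substantial step. Let $u_\gamma\to u$ weakly-$*$ in $BV(\Omega)$; we may assume $\liminf_\gamma\mathcal J^\gamma_{relax}(u_\gamma)<\infty$ and, after passing to a subsequence realising this $\liminf$, that the energies are uniformly bounded. Then, exactly as in the proof of Theorem \ref{Jrelaxexist}, the bound $|x|_\gamma\ge|x|-\tfrac12$ together with the coercivity \eqref{phicoercive} of the $\phi_i$ forces $\{u_\gamma\}$ to be bounded in $BV(\Omega)$, hence $u\in BV(\Omega)$, $u_\gamma\to u$ in $L^1(\Omega)$ and a.e., and $u\in\mathcal A$ by Mazur's theorem. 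For the fidelity I would pass to the limit by Fatou's lemma, using a.e.\ convergence together with continuity and lower boundedness of the $\phi_i$. For the regulariser I would exploit the monotonicity in $\gamma$: fixing $\gamma_0$, for all $\gamma\ge\gamma_0$ one has
\[
\int_\Omega|\nabla u_\gamma|_\gamma\,dx+C\int_\Omega|D_su_\gamma|\;\ge\;\int_\Omega|\nabla u_\gamma|_{\gamma_0}\,dx+C\int_\Omega|D_su_\gamma|,
\]
and the right-hand side is lower semicontinuous along $u_\gamma\to u$ in $L^1(\Omega)$ by the De Giorgi--Temam result \cite{DeTe84} already invoked for Theorem \ref{Jrelaxexist}. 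Taking the $\liminf$ in $\gamma$ and then letting $\gamma_0\to\infty$ (monotone convergence once more) gives the bound for the regularising part, and adding the two contributions yields $\mathcal J_{relax}(u)\le\liminf_\gamma\mathcal J^\gamma_{relax}(u_\gamma)$.

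Convergence of minimisers then follows from the fundamental theorem of $\Gamma$-convergence: equi-coercivity of $\{\mathcal J^\gamma_{relax}\}$ is exactly the $BV$-bound produced in the previous step, minimisers of $\mathcal J^\gamma_{relax}$ exist and are unique by Theorem \ref{Jrelaxexist}, and uniqueness of the limiting minimiser (under the same hypotheses that give uniqueness there) upgrades subsequential convergence to convergence of the whole family, along with convergence of the minimal values. I expect the $\Gamma$-$\liminf$ step to be the main obstacle: one must pass to the limit in the Huber term while simultaneously retaining control of the singular part $D_su_\gamma$, and the mechanism that makes this work is the combination of the monotonicity $\gamma\mapsto|\cdot|_\gamma$ with lower semicontinuity at each frozen level $\gamma_0$; some care is also needed in identifying the limit functional on $BV(\Omega)\setminus W^{1,1}(\Omega)$, since an energy-bounded sequence is a priori only $BV$-convergent, so this point rests on the relaxation already carried out for Theorem \ref{Jrelaxexist}.
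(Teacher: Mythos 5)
Your route is genuinely different from the paper's. The paper settles the $\Gamma$-convergence in one stroke: it notes that $|\nabla u|_\gamma+\tfrac{1}{2\gamma}$ decreases pointwise to $|\nabla u|$ and invokes \cite[Prop.~5.7]{DalMa93}, by which a decreasing, pointwise convergent sequence of functionals $\Gamma$-converges to the lower semicontinuous envelope of its pointwise limit; convergence of minimisers is then read off. You instead verify the two $\Gamma$-inequalities by hand: constant recovery sequences plus monotone convergence for the $\limsup$, and for the $\liminf$ the equi-coercivity bound $|x|_\gamma\ge |x|-\tfrac12$ together with \eqref{phicoercive}, Fatou for the fidelity, and the frozen-level monotonicity $|\cdot|_\gamma\ge|\cdot|_{\gamma_0}$ combined with the $L^1$-lower semicontinuity of \cite{DeTe84}, letting $\gamma_0\to\infty$ afterwards. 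That computation is sound and self-contained, and the deduction of minimiser convergence from equi-coercivity plus uniqueness is the standard fundamental theorem of $\Gamma$-convergence; in this sense your argument is more explicit than the paper's citation.

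There is, however, a real mismatch with the statement you claim to have proved, and your closing sentence does not repair it. Your $\liminf$ step yields $\int_\Omega|\nabla u|\,dx+C\int_\Omega|D_s u|+\sum_i\int_\Omega\lambda_i\phi_i(u)\,dx\le\liminf_\gamma\mathcal J^\gamma_{relax}(u_\gamma)$, i.e.\ a bound by the \emph{relaxed} functional, which is finite when $D_s u\neq 0$, whereas the displayed $\mathcal J_{relax}$ assigns the value $+\infty$ there. No refinement can close this: for $u\in(BV\cap\mathcal A)\setminus W^{1,1}$ one can take a strictly converging smooth approximation $u_\gamma\in W^{1,1}\cap\mathcal A$ with $\int_\Omega|\nabla u_\gamma|\,dx\to|Du|(\Omega)$, so the energies stay bounded and the $\liminf$ inequality toward the displayed functional is simply false in the $BV$-weak-$*$ topology. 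Deferring the point to ``the relaxation carried out for Theorem~\ref{Jrelaxexist}'' does not help, since that theorem relaxes $\mathcal J^\gamma_{ext}$ at fixed $\gamma$ and cannot upgrade your bound to $+\infty$. The correct reading --- and the one the paper's own proof adopts via \cite[Prop.~5.7]{DalMa93} --- is that the $\Gamma$-limit is the lower semicontinuous envelope of the displayed functional, namely $|Du|(\Omega)+\sum_i\int_\Omega\lambda_i\phi_i(u)\,dx$ on $BV(\Omega)\cap\mathcal A$ (this is exactly what your bound produces, with $C=1$), and the minimiser convergence is to the minimiser of this relaxed problem. If you restate the limit functional in this form, your proof is complete; as written, the identification of the limit on $BV(\Omega)\setminus W^{1,1}(\Omega)$ cannot be carried out.
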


\begin{proof}
The proof is a standard result that follows from the fact that a decreasing point wise converging sequence of functionals $\Gamma$- converges to the lower semicontinuous envelope of the point wise limit \cite[Propostion 5.7]{DalMa93}. In fact, $\int_\Omega |\nabla u|_\gamma + \frac{1}{2\gamma}$ decreases in $\gamma$ and converges pointwise to $\int_\Omega |\nabla u|$. Then, for $u\in BV(\Omega)$ the functional $\mathcal J^\gamma_{relax}$ (being the lower-semicontinuous envelope of $\mathcal J^\gamma_{ext}$) $\Gamma$- converges to the functional $\mathcal J_{relax}$ in Theorem \ref{thmgammaconv}. The latter is the lower-semicontinous envelope of the functional in \eqref{general denoise}.
\end{proof}

%

Although Theorem \ref{thmgammaconv} provides a convergence result for the regularized TV subproblems, it is not sufficient to conclude convergence of the optimal regularized weights. For this we need the continuity of the solution map $\lambda \rightarrow u(\lambda)$. Up to our knowledge, no sufficient continuity results for the control-to-state map in the case of a total variation minimiser as the state are known. There are various contributions in this directions \cite{CE05,Morozov,VO96,YP03} which are -- as they stand -- not strong enough to prove the desired result in our case. Indeed, this is a matter of future research. 

\section{Optimization problem in $H_0^1(\Omega)$}\label{sec:h1analysis}
In order to obtain continuity of the solution map and, hence, convergence of the regularized optimal parameters, we proceed in an alternative way and move, from now on, to a Hilbert space setting. Specifically, we replace the minimisation problem \eqref{general denoise} by the following elliptic-regularized version of it:
\begin{equation}\label{elliptic denoise}
\min_u {\left(\frac{\varepsilon}{2} \|Du\|^2_{L^2} + |Du|(\Omega) + \sum_{i=1}^d \int_\Omega
\lambda_i~ \phi_i (u)\; dx \right)}.
\end{equation}
where $0 < \varepsilon \ll 1$ is an artificial diffusion parameter.

A necessary and sufficient optimality condition for \eqref{elliptic denoise} is
given by the following elliptic variational inequality:
\begin{multline}\label{eq:original VI}
\varepsilon (Du,D(v-u))_{L^2} + \sum_{i=1}^d \int_{\Omega} \lambda_i \phi_i'(u)(v-u) ~dx\\+
\int_{\Omega}|Dv| ~dx-\int_{\Omega}|Du| ~dx \geq 0 \,\, \text{ for all }v \in
H_0^1(\Omega).
\end{multline}
Note that by adding the coercive term, we implicitely impose the solution space $H_0^1(\Omega)$ (see \cite{HintermuellerKunisch2004}).

Our aim is to determine the optimal choice of parameters $\lambda_i,
~i=1,...,d,$ by solving the following optimization problem:
\begin{subequations} \label{eq:optimization problem}
\begin{equation}
\min_{\lambda_i \geq 0, ~{i=1,...,d}} ~ g(u) + \beta \sum_{i=1}^d \|\lambda_i\|_{X}^2
\end{equation}
subject to
\begin{multline}  \label{eq:optimization problem eq 2}
\varepsilon (Du,D(v-u))_{L^2} + \sum_{i=1}^d \int_{\Omega} \lambda_i~
\phi_i'(u) (v-u) ~dx\\+ \int_{\Omega} |Dv| ~dx-\int_{\Omega} 
|Du| ~dx \geq 0 \, \, \text{ for all } v \in H_0^1(\Omega),
\end{multline}
\end{subequations}
where the space $X$ corresponds to $\mathbb R$ in the case of scalar parameters or to a Hilbert function space in the case of distributed functions. Problem \ref{eq:optimization problem} corresponds to an optimization problem governed by a variational inequality of the second kind (see \cite{Delosreyes2009} and the references therein).


Next, we perform the analysis of the optimization problem
\eqref{eq:optimization problem}. After proving existence of an optimal
solution, a regularization approach will be also proposed in this context. We will prove
the continuity of the control-to-state map and, based on it,
convergence of the regularized images and the optimal regularized
parameters. In the case of a smoother regularization of the TV term,
also differentiablity of the solution operator will be verified, which will lead us afterwards to a first order optimality system characterizing the optimal solution to \eqref{eq:optimization problem}.

We start with the following existence theorem.
\begin{theorem} \label{thm:existence of optima}
There exists an optimal solution for problem \eqref{eq:optimization problem}.
\end{theorem}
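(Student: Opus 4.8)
The plan is to apply the direct method of the calculus of variations to the reduced (control) problem, using that, by convexity of \eqref{elliptic denoise}, the constraint \eqref{eq:optimization problem eq 2} characterises $u$ as the unique minimiser over $H_0^1(\Omega)$ of the strictly convex, coercive functional
\[
\mathcal E_\lambda(u) = \frac{\varepsilon}{2}\|Du\|_{L^2}^2 + |Du|(\Omega) + \sum_{i=1}^d \int_\Omega \lambda_i\,\phi_i(u,f)\,dx ,
\]
so that for every admissible $\lambda\geq 0$ there is exactly one associated state $u=u(\lambda)$. First I would pick a minimising sequence $\{(\lambda^n,u_n)\}$ for \eqref{eq:optimization problem} with $u_n=u(\lambda^n)$. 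Since the cost is bounded below by $0$, the infimum is finite, and the Tikhonov term $\beta\sum_i\|\lambda_i\|_X^2$ forces $\{\lambda^n\}$ to be bounded in $X^d$; hence, along a subsequence, $\lambda^n\rightharpoonup\hat\lambda$ in $X^d$, and since $\{\lambda_i\geq 0\}$ is convex and closed, hence weakly closed, $\hat\lambda\geq 0$. In the scalar case $X=\mathbb R$ this is ordinary convergence; in the distributed case, where $X$ embeds compactly into $L^2(\Omega)$, one also gets $\lambda^n\to\hat\lambda$ strongly in $L^2(\Omega)$ and, along a further subsequence, a.e. in $\Omega$.

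Next I would derive an a priori bound on the states. Testing the minimality of $u_n$ for $\mathcal E_{\lambda^n}$ against a fixed reference function (e.g.\ $w=0$, as in the earlier existence proofs), and using that the $\phi_i$ are bounded from below, that $\phi_i(0,f)\in L^2(\Omega)$, that $\{\|\lambda_i^n\|_X\}$ is bounded, and the coercivity assumption \eqref{phicoercive}, one obtains that $\|Du_n\|_{L^2}$ and $|Du_n|(\Omega)$ are bounded; by the Poincar\'e inequality (zero boundary values) this upgrades to a uniform bound of $\{u_n\}$ in $H_0^1(\Omega)$. Consequently, along a subsequence, $u_n\rightharpoonup\hat u$ weakly in $H_0^1(\Omega)$, strongly in $L^2(\Omega)$ by Rellich--Kondrachov, and a.e.\ in $\Omega$; where the admissible set $\mathcal A$ is relevant, Mazur's theorem shows that the convex closed set $\mathcal A$ is weakly closed, so $\hat u\in\mathcal A$.

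The crux is to pass to the limit in the constraint, i.e.\ to show $\hat u=u(\hat\lambda)$. I would argue through the equivalent minimisation formulation: for every fixed $v\in H_0^1(\Omega)$ one has $\mathcal E_{\lambda^n}(u_n)\leq\mathcal E_{\lambda^n}(v)$, and it suffices to take $\liminf_n$ on both sides. On the right, $\sum_i\int_\Omega\lambda_i^n\phi_i(v,f)\,dx\to\sum_i\int_\Omega\hat\lambda_i\phi_i(v,f)\,dx$ by the convergence of $\lambda^n$ in $L^2(\Omega)$ (for $v$ with $\phi_i(v,f)\in L^2(\Omega)$; for the remaining $v$ the resulting inequality is vacuous). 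On the left, $u\mapsto\frac{\varepsilon}{2}\|Du\|_{L^2}^2$ is weakly lower semicontinuous on $H_0^1(\Omega)$, and so is the total variation $u\mapsto|Du|(\Omega)$ (by its lower semicontinuity with respect to weak-$*$ convergence in $BV(\Omega)$); for the fidelity terms I would split $\phi_i(u_n,f)=\bigl(\phi_i(u_n,f)-c_i\bigr)+c_i$, with $c_i$ a lower bound for $\phi_i$, handle the nonnegative pieces by Fatou's lemma using $\lambda_i^n\bigl(\phi_i(u_n,f)-c_i\bigr)\to\hat\lambda_i\bigl(\phi_i(\hat u,f)-c_i\bigr)$ a.e.\ together with nonnegativity, and the constant pieces by weak convergence of $\lambda^n$. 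This yields $\mathcal E_{\hat\lambda}(\hat u)\leq\mathcal E_{\hat\lambda}(v)$ for all such $v$, so $\hat u$ minimises $\mathcal E_{\hat\lambda}$ and hence solves \eqref{eq:optimization problem eq 2} with $\lambda=\hat\lambda$; thus $(\hat\lambda,\hat u)$ is feasible. Finally, $g$ is $C^1$, hence continuous on $L^2(\Omega)$, so $g(u_n)\to g(\hat u)$, while $\lambda\mapsto\sum_i\|\lambda_i\|_X^2$ is weakly lower semicontinuous; therefore the cost at $(\hat\lambda,\hat u)$ does not exceed $\liminf_n$ of the cost along the minimising sequence, which proves that $(\hat\lambda,\hat u)$ is optimal.

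\emph{Main obstacle.} The delicate step is the passage to the limit in the nonlinear coupling term $\int_\Omega\lambda_i^n\,\phi_i'(u_n)(v-u_n)\,dx$ (equivalently, in $\int_\Omega\lambda_i^n\phi_i(u_n,f)\,dx$), in which both factors converge only weakly once $\phi_i$ is superlinear, as for the Gaussian fidelity. Routing the argument through the convex minimisation formulation reduces what is required to weak lower semicontinuity plus Fatou's lemma, and it is precisely the artificial diffusion term $\frac{\varepsilon}{2}\|Du\|_{L^2}^2$ that promotes the $BV$ bound to an $H_0^1(\Omega)$ bound and thereby supplies the strong $L^2$ (and a.e.) convergence of $\{u_n\}$ that makes this step work; this is exactly why the Hilbert-space reformulation of this section was introduced.
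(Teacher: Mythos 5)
Your argument is correct in substance but takes a genuinely different route from the paper's. The paper works directly at the level of the variational inequality \eqref{eq:optimization problem eq 2}: the a priori bound on the states comes from testing the VI with $v=0$ and using the monotonicity of the $\phi_i'$, which gives $\varepsilon\|u_n\|_{H_0^1}^2 \le \sum_i \|\lambda_{n,i}\|_X\,\|\phi_i'(0)\|_{L^r}\,\|u_n\|_{L^p}$, and the passage to the limit is performed in the VI itself, pairing the weakly convergent $\lambda_n$ against the terms $\phi_i'(u_n)u_n$ and $\phi_i'(u_n)v$, which converge strongly thanks to $H_0^1(\Omega)\hookrightarrow L^p(\Omega)$ and the continuity of $\phi_i'$, while $\varepsilon\|Du\|_{L^2}^2$ and $\int_\Omega|Du|$ are handled by weak lower semicontinuity. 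You instead exploit the stated equivalence of \eqref{eq:optimization problem eq 2} with the strictly convex problem \eqref{elliptic denoise}, obtain the state bound by comparing energies at $u_n$ and at $0$, and pass to the limit in $\mathcal E_{\lambda^n}(u_n)\le \mathcal E_{\lambda^n}(v)$ via weak lower semicontinuity plus Fatou; this buys a cleaner treatment of the nonlinear coupling (no growth or second-derivative considerations for $\phi_i$, and it recycles the Section~\ref{sec:bvanalysis} machinery), and it makes transparent the role of the $\frac{\varepsilon}{2}\|Du\|^2_{L^2}$ term. The one place where your route costs more than the paper's is the Fatou step: it needs a.e.\ convergence of $\lambda^n$, which you secure only by additionally assuming that $X$ is compactly embedded in $L^2(\Omega)$; the paper's setting allows, e.g., $X=L^2(\Omega)$ with merely continuous injection, which your argument as written does not cover, whereas the paper's weak--strong pairing requires only weak $L^2$ convergence of $\lambda_n$. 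This is repairable within your scheme: for the concrete fidelities (Gaussian, Poisson, impulse) one has $\phi_i(u_n,f)\to\phi_i(\hat u,f)$ strongly in $L^2(\Omega)$ because $u_n\to\hat u$ in every $L^p(\Omega)$, $p<\infty$, so the fidelity term passes to the limit by weak--strong duality and the compactness assumption on $X$ becomes unnecessary; note also that on the right-hand side weak convergence of $\lambda^n$ against the fixed function $\phi_i(v,f)\in L^2(\Omega)$ already suffices.
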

\begin{proof}
Let $\{ \lambda_n \} \subset X^d$ be a minimizing sequence. Due to the Tikhonov term in the cost functional, we get that $\{ \lambda_n \}$ is bounded in $X^d$. From \eqref{eq:optimization problem eq 2} we additionally get that the sequence of images $\{ u_n \}$ satisfy
\begin{multline}
\varepsilon \|u_n\|_{H_0^1}^2+ \sum_{i=1}^d \int_{\Omega} {\lambda_n}_i~
\left[ \phi_i'(u_n)-\phi_i'(0) \right] u_n ~dx\\ + \int_{\Omega} |Du_n| ~dx \leq - \sum_{i=1}^d \int_{\Omega} {\lambda_n}_i~ \phi_i'(0) u_n ~dx,
\end{multline}
which, due to the monotonicity of the operators on the left hand side, implies that
\begin{equation}
\varepsilon \|u_n\|_{H_0^1}^2 \leq \sum_{i=1}^d \| {\lambda_n}_i\|_X \|\phi_i'(0)\|_{L^r} \|u_n \|_{L^p},
\end{equation}
for $2 < p < +\infty$ and $r=\frac{2p}{p-2}$. Thanks to the embedding $H_0^1(\Omega) \hookrightarrow L^p(\Omega)$, for all $1 \leq p < +\infty$, we get that 
\begin{equation}
\|u_n\|_{H_0^1} \leq C \sum_{i=1}^d \| {\lambda_n}_i\|_X \|\phi_i'(0)\|_{L^r},
\end{equation}
for some constant $C>0$. Consequently,
\begin{equation}
\{u_n\} \text{ is uniformly bounded in }H_0^1(\Omega).
\end{equation}

Therefore, there exists a subsequence $\{ (u_n, \lambda_n) \}$ which converges weakly in $H_0^1(\Omega) \times X^d$ to a limit point $(\hat u, \hat \lambda)$. Moreover, $u_n \to \hat u$  strongly in $L^p(\Omega)$ and, thanks to the continuity of $\phi'$, also 
\begin{equation}
\phi'(u_n)u_n \weak \phi'(\hat u) \hat u \hspace{0.5cm} \text{ strongly in }L^{\frac{p}{2}}(\Omega)
\end{equation}

Consequently, thanks to the continuity of $\phi'$ and the properties of $a(\cdot, \cdot)$, we get
\begin{align*}
a(\hat u, \hat u) + \sum_{i=1}^d \int_\Omega \hat \lambda_i \phi_i'(\hat u) \hat u &+ \int_\Omega |D \hat u|\\ &\leq  \liminf a(u_n, u_n) + \sum_{i=1}^d \int_{\Omega} {\lambda_n}_i \phi_i'(u_n) u_n + \int_\Omega |D u_n|\\
& \leq  \liminf a(u_n, v) + \sum_{i=1}^d \int_{\Omega} {\lambda_n}_i \phi_i'(u_n) v + \int_\Omega |D v|\\
& = a(\hat u, v)+ \sum_{i=1}^d \int_\Omega \hat \lambda_i \phi_i'(\hat u) v + \int_\Omega |D v|.
\end{align*}
Since the cost functional is w.l.s.c., it follows, together with the fact that $\{ \lambda: \lambda \geq 0 \}$ is weakly closed, that $(\hat \lambda, \hat u)$ is optimal for \eqref{eq:optimization problem}.
\end{proof}

Next, we consider the following family of regularized problems:
\begin{equation}\label{eq:reg. VI}
\epsilon (D u_\gamma, Dv)+ (h_\gamma(D u_\gamma), D v) + \sum_{i=1}^d \int_\Omega \lambda_i ~\phi_i'(u_\gamma)v =0, \forall v \in H_0^1(\Omega),
\end{equation}
where $h_\gamma(D u_\gamma)$ corresponds to an active-inactive-set approximation of the subdifferential of $|D u_\gamma|$, i.e., $h_\gamma$ coincides with an element of the subdifferential up to a small neighborhood of 0. The most natural choice is the function 
\begin{equation}\label{eq:derhub}
h_\gamma (z)= \frac{\gamma z}{\max(1,\gamma |z|)},
\end{equation} 
which corresponds to the derivative of the Huber function defined in \eqref{eq:huber}.
An alternative regularization is given by the $C^1$ function
\begin{equation}\label{eq:local reg. of q}
h_{\gamma}(z)=
\begin{cases}
g\frac{z}{|z|} &\text{ if }~\gamma |z| \geq g+ \frac{1}{2\gamma}\\
 \frac{z}{|z|} (g- \frac{\gamma}{2} (g- \gamma |z|+\frac{1}{2\gamma})^2) &\text{ if }~g-\frac{1}{2\gamma}\leq \gamma |z| \leq g+\frac{1}{2\gamma}\\
\gamma z &\text{ if }~\gamma |z| \leq g-\frac{1}{2\gamma},
\end{cases}
\end{equation}
where $g$ is a positive parameter. The latter smoothing for the total variation is going to be used in Proposition \ref{prop:linearisation} where differentiability of the regulariser is needed.

\begin{remark}\label{rem:properties VI}
For a fixed $\lambda$, it can be verified that \eqref{eq:reg. VI} has a unique solution. Moreover, the sequence of regularized solutions $\{ u_\gamma \}$ converges strongly in $H_0^1(\Omega)$ to the solution of \eqref{eq:original VI} (cf. \cite{Delosreyes2009}).
\end{remark}

Based on the regularized problems \eqref{eq:reg. VI}, we now focus on the following optimization problem:
\begin{subequations} \label{eq:reg. optimization problem}
\begin{equation}
\min_{\lambda_i \geq 0, ~{i=1,...,d}} ~ g(u) + \beta \sum_{i=1}^d \|\lambda_i\|_{X}^2
\end{equation}
subject to
\begin{equation}
\epsilon (D u_\gamma, Dv)+ (h_\gamma(D u_\gamma), D v) + \sum_{i=1}^d \int_\Omega \lambda_i ~\phi_i'(u_\gamma)v =0, \forall v \in H_0^1(\Omega),\label{eq:regeps. VI}
\end{equation}
\end{subequations}
where $0<\epsilon\ll 1$. In this setting we can prove the following continuity and convergence results.

\begin{proposition} \label{lem:continuity of reg mapping}
Let $\{ \lambda_n \}$ be a sequence in $X^d$ such that $\lambda_n \weak \hat \lambda$ weakly in $X^d$ as $n \to \infty$. Further, let $u_n:=u_\gamma (\lambda_n)$ denote the solution to \eqref{eq:regeps. VI} associated with $\lambda_n$ and $\hat u:= u_\gamma(\hat \lambda)$. Then 
$$u_n \to \hat u \text{ strongly in }H_0^1(\Omega).$$
\end{proposition}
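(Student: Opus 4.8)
The plan is a monotone-operator (Minty) argument combined with the compactness of the embedding $H_0^1(\Omega)\hookrightarrow L^p(\Omega)$, $1\le p<\infty$. \textbf{A priori bound.} Testing \eqref{eq:regeps. VI} with $v=u_n$, using that $h_\gamma$ is monotone with $h_\gamma(0)=0$ (it is the gradient of the convex function in \eqref{eq:huber}) and that each $\phi_i'$ is monotone by convexity of $\phi_i$, one obtains, exactly as in the proof of Theorem \ref{thm:existence of optima},
$$
\epsilon\|u_n\|_{H_0^1}^2 \le \sum_{i=1}^d \|(\lambda_n)_i\|_X\,\|\phi_i'(0)\|_{L^r}\,\|u_n\|_{L^p},\qquad 2<p<\infty,\ r=\tfrac{2p}{p-2}.
$$
Since $\lambda_n\weak\hat\lambda$ implies $\{\lambda_n\}$ bounded in $X^d$, the embedding $H_0^1\hookrightarrow L^p$ gives a uniform bound $\|u_n\|_{H_0^1}\le C$. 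Passing to a (not relabelled) subsequence, $u_n\weak\tilde u$ in $H_0^1(\Omega)$, $u_n\to\tilde u$ strongly in $L^p(\Omega)$ for every $p<\infty$ and a.e.\ in $\Omega$; moreover $|h_\gamma(z)|\le 1$ for all $z$, so $h_\gamma(Du_n)$ is bounded in $L^2(\Omega;\mathbb R^2)$ and, along a further subsequence, $h_\gamma(Du_n)\weak\chi$ in $L^2(\Omega;\mathbb R^2)$.

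\textbf{Lower-order term and limit equation.} Since $u_n\to\tilde u$ strongly in every $L^p$ and $\phi_i'$ is continuous with the growth assumed in the paper, $\phi_i'(u_n)v\to\phi_i'(\tilde u)v$ strongly in $L^2(\Omega)$ for each $v\in H_0^1(\Omega)$, and $\phi_i'(u_n)u_n\to\phi_i'(\tilde u)\tilde u$ strongly in $L^2(\Omega)$. Combined with $(\lambda_n)_i\weak\hat\lambda_i$ weakly in $X\hookrightarrow L^2(\Omega)$ this yields $\int_\Omega(\lambda_n)_i\phi_i'(u_n)v\,dx\to\int_\Omega\hat\lambda_i\phi_i'(\tilde u)v\,dx$ for all $v\in H_0^1(\Omega)$ and $\int_\Omega(\lambda_n)_i\phi_i'(u_n)u_n\,dx\to\int_\Omega\hat\lambda_i\phi_i'(\tilde u)\tilde u\,dx$. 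Passing to the limit in \eqref{eq:regeps. VI} gives
$$
\epsilon(D\tilde u,Dv)+(\chi,Dv)+\sum_{i=1}^d\int_\Omega\hat\lambda_i\phi_i'(\tilde u)v\,dx=0,\qquad\forall v\in H_0^1(\Omega).
$$
It remains only to identify $\chi=h_\gamma(D\tilde u)$.

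\textbf{Minty identification.} Testing \eqref{eq:regeps. VI} with $v=u_n$ yields the energy identity $(h_\gamma(Du_n),Du_n)=-\epsilon\|Du_n\|_{L^2}^2-\sum_i\int_\Omega(\lambda_n)_i\phi_i'(u_n)u_n$, whence, by weak lower semicontinuity of the norm, the previous step and the limit equation with $v=\tilde u$, $\limsup_n(h_\gamma(Du_n),Du_n)\le -\epsilon\|D\tilde u\|_{L^2}^2-\sum_i\int_\Omega\hat\lambda_i\phi_i'(\tilde u)\tilde u=(\chi,D\tilde u)$. For arbitrary $w\in H_0^1(\Omega)$, monotonicity of $h_\gamma$ gives $0\le(h_\gamma(Du_n)-h_\gamma(Dw),D(u_n-w))$; expanding, using the energy identity for $(h_\gamma(Du_n),Du_n)$, $(h_\gamma(Du_n),Dw)\to(\chi,Dw)$, $(h_\gamma(Dw),Du_n)\to(h_\gamma(Dw),D\tilde u)$, and taking $\limsup$, one obtains $(\chi-h_\gamma(Dw),D(\tilde u-w))\ge 0$ for all $w\in H_0^1(\Omega)$. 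Choosing $w=\tilde u-t\varphi$, dividing by $t>0$ and letting $t\downarrow 0$ (using Lipschitz continuity of $h_\gamma$) gives $(\chi-h_\gamma(D\tilde u),D\varphi)=0$ for all $\varphi\in H_0^1(\Omega)$. Hence $\tilde u$ solves \eqref{eq:regeps. VI} with parameter $\hat\lambda$; by the uniqueness stated in Remark \ref{rem:properties VI}, $\tilde u=\hat u$, and since the limit is independent of the extracted subsequence, $u_n\weak\hat u$ in $H_0^1(\Omega)$ for the whole sequence.

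\textbf{Strong convergence and main difficulty.} Using monotonicity of $h_\gamma$ once more, $\epsilon\|D(u_n-\hat u)\|_{L^2}^2\le \epsilon\|D(u_n-\hat u)\|_{L^2}^2+(h_\gamma(Du_n)-h_\gamma(D\hat u),D(u_n-\hat u))=:R_n$; expanding $R_n$, the term $\epsilon\|Du_n\|^2+(h_\gamma(Du_n),Du_n)=-\sum_i\int(\lambda_n)_i\phi_i'(u_n)u_n$ converges to $-\sum_i\int\hat\lambda_i\phi_i'(\hat u)\hat u$, the cross term $\epsilon(Du_n,D\hat u)+(h_\gamma(Du_n),D\hat u)$ converges to $\epsilon\|D\hat u\|^2+(h_\gamma(D\hat u),D\hat u)$, and the remaining terms vanish since $u_n-\hat u\weak 0$; invoking the limit equation with $v=\hat u$ these combine to $R_n\to 0$, so $u_n\to\hat u$ strongly in $H_0^1(\Omega)$. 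I expect the delicate point to be the Minty identification: because $h_\gamma$ is nonlinear and only $Du_n\weak D\tilde u$ is available, $h_\gamma(Du_n)$ cannot be passed to the limit directly, and the monotonicity argument is needed, with the energy identity hinging on the strong $L^2$-convergence of $\phi_i'(u_n)u_n$ provided by the compact Sobolev embedding; one should also check that the growth hypotheses on the $\phi_i'$ are compatible with the weak--strong convergence used for the lower-order term.
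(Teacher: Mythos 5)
Your proof is correct, and its skeleton (a priori bound, compact embedding, identification of the weak limit, uniqueness to get the whole sequence, then strong convergence via monotonicity of $h_\gamma$) coincides with the paper's; the two key steps, however, are executed differently. For the identification of the limit, the paper simply invokes the argument of Theorem \ref{thm:existence of optima}, i.e.\ it passes to the limit in the variational-inequality form of \eqref{eq:regeps. VI} using weak lower semicontinuity of the quadratic term and convexity of the Huber functional; you instead extract a weak $L^2$ limit $\chi$ of $h_\gamma(Du_n)$ and identify $\chi$ through a full Minty argument (energy identity, $\limsup$ inequality, perturbation $w=\tilde u-t\varphi$). Your route is self-contained and generalizes to any bounded, monotone, hemicontinuous $h_\gamma$, at the price of the extra bookkeeping with $\chi$ (note that the weak convergence of $h_\gamma(Du_n)$ is only along a subsequence, so your final limits such as $(h_\gamma(Du_n),D\hat u)\to(h_\gamma(D\hat u),D\hat u)$ need the standard subsequence-of-subsequences remark — harmless, but worth stating). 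For the strong convergence, the paper's argument is shorter: subtract the equations for $u_n$ and $\hat u$, insert $\lambda_n\phi'(\hat u)$ on both sides, test with $v=u_n-\hat u$, and drop both monotone terms, leaving only $\varepsilon\|D(u_n-\hat u)\|^2\le \bigl|\int_\Omega(\hat\lambda-\lambda_n)\phi'(\hat u)(u_n-\hat u)\,dx\bigr|$, which vanishes by the strong $L^p$ convergence of $u_n$; this avoids any reference to $\chi$ or to energy identities. Your term-by-term passage to the limit in $R_n$ reaches the same conclusion and is also fine. Finally, both your proof and the paper's rely on the same implicit growth/regularity assumptions on $\phi_i'$ (so that $\phi_i'(u_n)u_n$ and $\phi_i'(u_n)v$ converge strongly in a space dual to that of $\lambda_n$), a point you correctly flag.
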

\begin{proof}
Since $\lambda_n \weak \hat \lambda$ weakly in $X$, it follows, by the principle of uniform boundedness, that $\{ \lambda_n \}$ is bounded in $X$. From \eqref{eq:regeps. VI} we additionally get that
\begin{multline}
\varepsilon \|u_n\|_{H_0^1}^2+ \sum_{i=1}^d \int_{\Omega} {\lambda_n}_i~
\left[ \phi_i'(u_n)-\phi_i'(0) \right] u_n ~dx\\ + (h_\gamma(D u_n), D u_n) \leq - \sum_{i=1}^d \int_{\Omega} {\lambda_n}_i~ \phi_i'(0) u_n ~dx,
\end{multline}
which, proceeding as in the proof of Theorem \ref{thm:existence of optima}, implies that
\begin{equation}
\|u_n\|_{H_0^1} \leq C \sum_{i=1}^d \| {\lambda_n}_i\|_X \|\phi_i'(0)\|_{L^r},
\end{equation}
for some constant $C>0$. Hence, $\{u_n\} \text{ is uniformly bounded in }H_0^1(\Omega).$

Consequently, there exists a subsequence (denoted the same) and a limit $\hat u$ such that
$$u_n \weak \hat u \text{ in }H_0^1(\Omega) \hspace{0.5cm} \text{ and } \hspace{0.5cm} u_n \to \hat u \text{ in }L^p(\Omega), ~1 \leq p < +\infty.$$

Thanks to the structure of the regularized VI \eqref{eq:regeps. VI} it also follows (as in the proof of Theorem \ref{thm:existence of optima}) that $\hat u$ is solution of the regularized VI associated with $\hat \lambda$. Since the solution to \eqref{eq:regeps. VI} is unique, it additionally follows that the whole sequence $\{ u_n \}$ converges weakly towards $\hat u$.

To verify strong convergence, we take the difference of the variational equations satisfied by $u_n$ and $\hat u$ and obtain that
\begin{multline*}
\varepsilon (D u_n -D \hat u, Dv)+ (h_\gamma(D u_n)-h_\gamma(D \hat u), D v) \\= \int_\Omega \left[\hat \lambda ~\phi'(\hat u) -\lambda_n ~\phi'(u_n) \right] v ~dx, \forall v \in H_0^1(\Omega).
\end{multline*}
Adding the term $-\lambda_n ~\phi'(\hat u)$ on both sides of the latter yields
\begin{multline*}
\varepsilon (D u_n -D \hat u, Dv)+ (h_\gamma(D u_n)-h_\gamma(D \hat u), D v) \\ \int_\Omega \left[\lambda_n ~\phi'(u_n) -\lambda_n ~\phi'(\hat u) \right]v ~dx= \int_\Omega \left[\hat \lambda ~\phi'(\hat u) -\lambda_n ~\phi'(\hat u) \right] v ~dx, \forall v \in H_0^1(\Omega).
\end{multline*}

Choosing $v= u_n- \hat u$ and thanks to the monotonicity of the operator on the left hand side, we then obtain that
\begin{equation*}
\varepsilon \|D u_n -D \hat u \|^2 \leq \left| \int_\Omega \left[\hat \lambda ~\phi'(\hat u) -\lambda_n ~\phi'(\hat u) \right] (u_n- \hat u) ~dx \right|,
\end{equation*}
which thanks to the strong convergence $u_n \to \hat u \text{ in }L^p(\Omega), ~1 \leq p < +\infty,$ and the regularity $\phi'(\hat u) \in L^{\frac{p}{2}}(\Omega)$, implies the result.
\end{proof}

\begin{theorem}
There exists an optimal solution for each regularized problem \eqref{eq:reg. optimization problem}. Moreover, the sequence $\{ \lambda_\gamma \}$ of regularized optimal parameters is bounded in $X^d$ and every weakly convergent subsequence converges towards an optimal solution of \eqref{eq:optimization problem}.
\end{theorem}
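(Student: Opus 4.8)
The plan is to prove existence at fixed $\gamma$ by the direct method, using Proposition~\ref{lem:continuity of reg mapping} for the continuity of the control-to-state map, and then to obtain the convergence of the regularised optimal parameters by a careful simultaneous passage to the limit $\gamma\to\infty$ and $\lambda_\gamma\weak\lambda^*$. For the existence claim, let $\{\lambda_n\}\subset X^d$, $\lambda_n\geq 0$, be a minimising sequence for \eqref{eq:reg. optimization problem}, which exists since the cost functional is bounded from below. The Tikhonov term $\beta\sum_i\|\lambda_i\|_X^2$ makes $\{\lambda_n\}$ bounded in $X^d$, so $\lambda_n\weak\hat\lambda$ along a subsequence. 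By Proposition~\ref{lem:continuity of reg mapping} the associated states satisfy $u_\gamma(\lambda_n)\to u_\gamma(\hat\lambda)$ strongly in $H_0^1(\Omega)$, hence strongly in $L^2(\Omega)$; since $g$ is $C^1$ on $L^2(\Omega)$ it is continuous, so $g(u_\gamma(\lambda_n))\to g(u_\gamma(\hat\lambda))$. As $\{\lambda\geq 0\}$ is convex and closed, hence weakly closed, $\hat\lambda\geq 0$, and $\lambda\mapsto\sum_i\|\lambda_i\|_X^2$ is weakly lower semicontinuous; passing to the $\liminf$ shows $\hat\lambda$ attains the infimum.

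For the uniform bound, let $\lambda_\gamma$ be optimal for \eqref{eq:reg. optimization problem} with state $u_\gamma$. The parameter $\lambda=0$ is admissible and, testing \eqref{eq:regeps. VI} with $\lambda=0$ and $v=u_\gamma$, one gets $\varepsilon\|Du_\gamma\|_{L^2}^2+(h_\gamma(Du_\gamma),Du_\gamma)=0$; since $h_\gamma(z)\cdot z\geq 0$ for every $z$, this forces $u_\gamma(0)=0$. Optimality then yields
$$
g(u_\gamma)+\beta\sum_{i=1}^d\|(\lambda_\gamma)_i\|_X^2\leq g(0),
$$
so, $g$ being bounded from below, $\{\lambda_\gamma\}$ is bounded in $X^d$. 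Testing \eqref{eq:regeps. VI} with $v=u_\gamma$ and using the monotonicity of $\phi_i'$ as in Proposition~\ref{lem:continuity of reg mapping} also gives $\|u_\gamma\|_{H_0^1}\leq C\sum_i\|(\lambda_\gamma)_i\|_X\,\|\phi_i'(0)\|_{L^r}$, so $\{u_\gamma\}$ is bounded in $H_0^1(\Omega)$ as well.

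Now take any weakly convergent subsequence $\lambda_{\gamma_k}\weak\lambda^*$ in $X^d$ with $\gamma_k\to\infty$; then $\lambda^*\geq 0$. Since $\{u_{\gamma_k}\}$ is bounded in $H_0^1(\Omega)$, a further subsequence satisfies $u_{\gamma_k}\weak u^*$ in $H_0^1(\Omega)$ and $u_{\gamma_k}\to u^*$ strongly in $L^p(\Omega)$, $1\leq p<\infty$. The core step is to identify $u^*$ with the solution $u(\lambda^*)$ of the original variational inequality \eqref{eq:optimization problem eq 2}. To this end, note that $h_\gamma$ is the gradient of the convex integrand $|\cdot|_\gamma$, so $(h_\gamma(Du_{\gamma_k}),Dw-Du_{\gamma_k})\leq\int_\Omega\bigl(|Dw|_{\gamma_k}-|Du_{\gamma_k}|_{\gamma_k}\bigr)\,dx$ for every $w\in H_0^1(\Omega)$; inserting $v=w-u_{\gamma_k}$ into \eqref{eq:regeps. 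VI} and using this inequality turns the regularised equation into
\begin{multline*}
\varepsilon(Du_{\gamma_k},D(w-u_{\gamma_k}))+\sum_{i=1}^d\int_\Omega(\lambda_{\gamma_k})_i\,\phi_i'(u_{\gamma_k})(w-u_{\gamma_k})\,dx\\+\int_\Omega|Dw|_{\gamma_k}\,dx-\int_\Omega|Du_{\gamma_k}|_{\gamma_k}\,dx\geq 0 .
\end{multline*}
I would then take $\limsup_k$: the $\varepsilon$-term splits into a weakly convergent part plus $-\varepsilon\|Du_{\gamma_k}\|_{L^2}^2$, bounded above by $-\varepsilon\|Du^*\|_{L^2}^2$ by weak lower semicontinuity of the norm; the fidelity term converges by the argument already used in Theorem~\ref{thm:existence of optima} and Proposition~\ref{lem:continuity of reg mapping}, namely $\phi_i'(u_{\gamma_k})(w-u_{\gamma_k})$ converges strongly in a suitable $L^q(\Omega)$ by continuity and growth of $\phi_i'$ while $(\lambda_{\gamma_k})_i\weak\lambda^*_i$ in the dual space; $\int_\Omega|Dw|_{\gamma_k}\,dx\to\int_\Omega|Dw|\,dx$ by monotone convergence, since $|Dw|_\gamma\uparrow|Dw|$; and $-\int_\Omega|Du_{\gamma_k}|_{\gamma_k}\,dx\leq-\int_\Omega|Du_{\gamma_k}|\,dx+|\Omega|/(2\gamma_k)$ with $\liminf_k\int_\Omega|Du_{\gamma_k}|\,dx\geq\int_\Omega|Du^*|\,dx$. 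This yields exactly \eqref{eq:optimization problem eq 2} for $(u^*,\lambda^*)$, and by uniqueness of the solution of the original VI, $u^*=u(\lambda^*)$; in particular the whole sequence $u_{\gamma_k}$ converges to $u^*$ strongly in $L^2(\Omega)$ by compactness of $H_0^1(\Omega)\hookrightarrow L^2(\Omega)$.

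Finally, to see that $\lambda^*$ is optimal for \eqref{eq:optimization problem}, fix any admissible $\bar\lambda\geq 0$ with state $\bar u=u(\bar\lambda)$ solving \eqref{eq:optimization problem eq 2}. For each $\gamma_k$, $\bar\lambda$ is admissible for \eqref{eq:reg. optimization problem}, so optimality of $\lambda_{\gamma_k}$ gives
$$
g(u_{\gamma_k})+\beta\sum_{i=1}^d\|(\lambda_{\gamma_k})_i\|_X^2\leq g(u_{\gamma_k}(\bar\lambda))+\beta\sum_{i=1}^d\|\bar\lambda_i\|_X^2 .
$$
On the right, Remark~\ref{rem:properties VI} applied to the fixed parameter $\bar\lambda$ gives $u_{\gamma_k}(\bar\lambda)\to\bar u$ strongly in $H_0^1(\Omega)$, hence $g(u_{\gamma_k}(\bar\lambda))\to g(\bar u)$; on the left, $g(u_{\gamma_k})\to g(u^*)$ and $\liminf_k\|(\lambda_{\gamma_k})_i\|_X^2\geq\|\lambda^*_i\|_X^2$. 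Taking $\liminf_k$ gives $g(u^*)+\beta\sum_i\|\lambda^*_i\|_X^2\leq g(\bar u)+\beta\sum_i\|\bar\lambda_i\|_X^2$, and since $\bar\lambda$ was arbitrary, $\lambda^*$ is optimal. I expect the main obstacle to be the third step: neither Remark~\ref{rem:properties VI} (fixed $\lambda$) nor Proposition~\ref{lem:continuity of reg mapping} (fixed $\gamma$) covers the joint limit, so one has to recover the non-smooth VI from the smooth equations, which is precisely what the convexity inequality for $|\cdot|_\gamma$ together with the one-sided $\limsup$ estimates for the $\varepsilon$- and total-variation terms are designed to do; note that strong $H_0^1$-convergence of $u_{\gamma_k}$ is not needed, since $g$ is continuous on $L^2(\Omega)$.
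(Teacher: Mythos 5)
Your proposal is correct, and its existence part, the feasibility-of-$(0,0)$ bound on $\{\lambda_\gamma\}$, and the final comparison with a fixed admissible $\bar\lambda$ (via Remark~\ref{rem:properties VI} and weak lower semicontinuity) all coincide with the paper's argument. Where you genuinely diverge is the key identification step in the joint limit $\gamma\to\infty$, $\lambda_\gamma\weak\lambda^*$: the paper simply invokes Remark~\ref{rem:properties VI} (consistency at fixed $\lambda$) together with Proposition~\ref{lem:continuity of reg mapping} (continuity at fixed $\gamma$) and a triangle inequality to assert $u_\gamma(\lambda_\gamma)\to\hat u$ strongly in $H_0^1(\Omega)$, where $\hat u$ solves \eqref{eq:original VI} for $\hat\lambda$ --- an argument that, as you point out, requires some uniformity in $\gamma$ of the continuity estimate which is not stated explicitly. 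You instead pass to the limit directly in the regularized equations: using that $h_\gamma$ is the gradient of the convex Huber integrand to convert \eqref{eq:regeps. VI} into a variational inequality with Huberized total variation, and then combining weak lower semicontinuity of $\|Du\|_{L^2}^2$ and of $\int_\Omega|Du|$, the uniform bound $\bigl||z|_\gamma-|z|\bigr|\leq 1/(2\gamma)$, and the strong-times-weak convergence of the fidelity terms, to recover \eqref{eq:optimization problem eq 2} for $(u^*,\lambda^*)$ and conclude by uniqueness. This buys a self-contained treatment of the joint limit that does not rest on an unproved uniform continuity of the control-to-state maps, at the price of a longer one-sided $\limsup$ computation and of obtaining only weak $H_0^1$ (plus strong $L^p$) convergence of the states --- which, as you correctly note, suffices because $g$ is continuous on $L^2(\Omega)$, whereas the paper's route, if one accepts the triangle-inequality step, additionally delivers strong $H_0^1$ convergence of $u_\gamma(\lambda_\gamma)$.
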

\begin{proof}
Let $\{ \lambda_n\}$ be a minimizing sequence. From the structure of the cost functional and the properties of \eqref{eq:regeps. VI} it follows that the sequence is bounded. Consequently, there exists a subsequence (denoted the same) and a limit $\lambda^*$ such that $\lambda_n \weak \lambda^*$ weakly in $X^d$. From Proposition \ref{lem:continuity of reg mapping} and the weakly lower semicontinuity of the cost functional, optimality of $\lambda^*$ follows, and, therefore, existence of an optimal solution.

Let now $\{ \lambda_\gamma \}_{\gamma >0}$ be a sequence of optimal solutions to \eqref{eq:reg. optimization problem}. Since $(0,0) \in H_0^1(\Omega) \times X^d$ is feasible for each $\gamma >0$, it follows that
$$J(u_\gamma(\lambda_\gamma),\lambda_\gamma) \leq J(u_\gamma(0),0) =J(0,0).$$
Thanks to the Tikhonov term in the cost functional it then follows that $\{ \lambda_\gamma \}_{\gamma >0}$ is bounded.

Let $\hat \lambda$ be the limit point of a weakly convergent subsequence (also denoted by $\{ \lambda_\gamma \}$). From Remark \ref{rem:properties VI} and Proposition \ref{lem:continuity of reg mapping} it follows, by using the triangle inequality, that
$$u_{\gamma}(\lambda_\gamma) \to \hat u \hspace{0.3cm} \text{ strongly in }  H_0^1(\Omega) \hspace{0.3cm} \text{ as }\gamma \to \infty,$$
where $\hat u$ denotes the solution to \eqref{eq:original VI} associated with $\hat \lambda$.

From the weakly lower semicontinuity of the cost functional, we finally get that
\begin{align*}
J(\hat u, \hat \lambda)  \leq \liminf_{\gamma \to \infty} J(u_\gamma(\lambda_\gamma), \lambda_\gamma) \leq \liminf_{\gamma \to \infty} J(u_\gamma(\bar \lambda), \bar \lambda) =J(\bar u, \bar \lambda),
\end{align*}
where $\bar \lambda$ is an optimal solution to \eqref{eq:reg. optimization problem}.
\end{proof}

The next proposition is concerned with the differentiability of the solution operator. This result will lead us thereafter (see Theorem \ref{thm:optimality system}) to get an expression for the gradient of the cost functional and also to obtain an optimality system for the characterization of the optimal solutions to \eqref{eq:optimization problem}.
\begin{proposition}\label{prop:linearisation}
Let $G_\gamma: X^d \mapsto H_0^1(\Omega)$ be the solution operator, which assigns to each parameter $\lambda$ the corresponding solution to the regularized VI \eqref{eq:reg. VI}, with the function $h_\gamma$ given by \eqref{eq:local reg. of q}. Then the operator $G_\gamma$ is G\^ateaux differentiable and its derivative at $\bar \lambda$, in direction $\xi$, is given by the unique solution $z \in H_0^1(\Omega)$ of the following linearized equation:
\begin{multline} \label{eq:linearized equation}
 \epsilon (D z, Dv)+ (h_\gamma'(D \bar u) D z, D v) + \sum_{i=1}^d \int_\Omega \lambda_i ~\phi_i''(\bar u) ~z ~v ~dx\\ + \sum_{i=1}^d \int_\Omega \xi_i ~\phi_i'(\bar u) ~v ~dx=0, ~\text{ for all } v \in H_0^1(\Omega).
\end{multline}
\end{proposition}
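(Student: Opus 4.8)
We plan to prove the proposition by the direct method of difference quotients. This route, rather than an implicit function theorem argument, seems unavoidable because the Nemytskii operator $u\mapsto h_\gamma(Du)$ is not continuously Fr\'echet differentiable from $H_0^1(\Omega)$ into $L^2(\Omega)$; on the other hand, for the smoothing \eqref{eq:local reg. of q} the map $h_\gamma$ is globally $C^1$ with derivative that is bounded and symmetric positive semidefinite (being the Jacobian of the monotone gradient map $h_\gamma$), which is precisely why this regularization is used here instead of \eqref{eq:derhub}. Fix $\bar\lambda\geq 0$, a direction $\xi$, and set $\bar u:=G_\gamma(\bar\lambda)$, $u_t:=G_\gamma(\bar\lambda+t\xi)$ for $t>0$ small, and $z_t:=(u_t-\bar u)/t$. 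Subtracting the equations \eqref{eq:reg. VI} for $u_t$ and $\bar u$, dividing appropriately, and testing with $v=z_t$, the monotonicity of $h_\gamma$ and of each $\phi_i'$ together with $\bar\lambda_i\geq 0$ make the corresponding terms nonnegative, whence $\epsilon\|Dz_t\|^2\leq\bigl|\sum_i\int_\Omega\xi_i\,\phi_i'(u_t)\,z_t\,dx\bigr|$. Estimating the right-hand side by H\"older's inequality, the embedding $H_0^1(\Omega)\hookrightarrow L^p(\Omega)$, and the uniform-in-$t$ bound on $u_t$ in $H_0^1(\Omega)$ (proceeding exactly as in the proof of Theorem \ref{thm:existence of optima}, applied to the bounded sequence $\bar\lambda+t\xi$), we obtain $\|z_t\|_{H_0^1}\leq C$ uniformly in $t$.

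Next I would pass to the limit $t\to 0$. Along a subsequence, $z_t\weak z$ in $H_0^1(\Omega)$ and strongly in $L^p(\Omega)$. Writing $h_\gamma(Du_t)-h_\gamma(D\bar u)=A_t(Du_t-D\bar u)$ with $A_t:=\int_0^1 h_\gamma'(D\bar u+s\,t\,Dz_t)\,ds$, and $\phi_i'(u_t)-\phi_i'(\bar u)=B_{i,t}(u_t-\bar u)$ with $B_{i,t}:=\int_0^1\phi_i''(\bar u+s\,t\,z_t)\,ds$, the equation for $z_t$ becomes $\epsilon(Dz_t,Dv)+(A_tDz_t,Dv)+\sum_i\int_\Omega\bar\lambda_iB_{i,t}z_tv\,dx+\sum_i\int_\Omega\xi_i\phi_i'(u_t)v\,dx=0$. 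By Proposition \ref{lem:continuity of reg mapping}, $u_t\to\bar u$ strongly in $H_0^1(\Omega)$ (since $\bar\lambda+t\xi\to\bar\lambda$), hence $Du_t\to D\bar u$ and $u_t\to\bar u$ a.e.\ along a further subsequence; since $h_\gamma'$ and $\phi_i''$ are continuous with the relevant boundedness/growth, dominated convergence gives $A_t\to h_\gamma'(D\bar u)$ a.e.\ with uniform $L^\infty$ bound and $B_{i,t}\to\phi_i''(\bar u)$ a.e.\ with a uniform bound in the appropriate $L^s$. Combining these with $Dz_t\weak Dz$ in $L^2(\Omega)$ through the standard weak--strong argument (split $A_tDz_t=(A_t-h_\gamma'(D\bar u))Dz_t+h_\gamma'(D\bar u)Dz_t$ and note $A_t-h_\gamma'(D\bar u)\to 0$ in every $L^q$, $q<\infty$), and using $\phi_i'(u_t)\to\phi_i'(\bar u)$ in the relevant $L^{p/2}(\Omega)$, one passes to the limit term by term and concludes that $z$ solves the linearized equation \eqref{eq:linearized equation}.

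To conclude, observe that \eqref{eq:linearized equation} is linear in $z$, with bilinear form $B(z,v)=\epsilon(Dz,Dv)+(h_\gamma'(D\bar u)Dz,Dv)+\sum_i\int_\Omega\lambda_i\phi_i''(\bar u)zv\,dx$ bounded on $H_0^1(\Omega)$ (by the $L^\infty$-bound on $h_\gamma'$ and the integrability of $\phi_i''(\bar u)$) and coercive, since $h_\gamma'(D\bar u)$ is positive semidefinite and $\lambda_i\phi_i''(\bar u)\geq 0$; moreover $v\mapsto-\sum_i\int_\Omega\xi_i\phi_i'(\bar u)v\,dx$ is a bounded linear functional on $H_0^1(\Omega)$. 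Hence by Lax--Milgram \eqref{eq:linearized equation} has a unique solution $z$, which depends linearly and boundedly on $\xi$; uniqueness forces the whole family $z_t$ (not just a subsequence) to converge to $z$. Finally, to upgrade to strong convergence in $H_0^1(\Omega)$ — and hence to genuine G\^ateaux differentiability with $G_\gamma'(\bar\lambda)\xi=z$ — one subtracts the equations satisfied by $z_t$ and $z$, tests with $z_t-z$, and again invokes the monotonicity (to make the quadratic terms nonnegative) and the weak--strong argument (to send the residual terms to $0$), obtaining that $\epsilon\|D(z_t-z)\|^2$ is dominated by vanishing quantities. I expect the main obstacle to be the limit passage in the quasilinear term $t^{-1}\bigl(h_\gamma(Du_t)-h_\gamma(D\bar u)\bigr)$: this is exactly where the $C^1$-regularity of $h_\gamma$ from \eqref{eq:local reg. of q}, the strong $H_0^1$-convergence $u_t\to\bar u$ supplied by Proposition \ref{lem:continuity of reg mapping}, and the weak--strong product lemma must all be combined; a secondary technical point is securing the domination needed to control $B_{i,t}$ for fidelities such as the Poisson term, which relies on the available lower bound keeping $\bar u$ away from zero.
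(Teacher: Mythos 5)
Your argument is correct and follows the paper's skeleton for most of its length: difference quotients $z_t=(u_t-\bar u)/t$, an a priori $H_0^1$-bound obtained by testing the difference of the two state equations and discarding the monotone terms (the paper keeps $\phi'(\bar u)$ on the right-hand side where you keep $\phi'(u_t)$, an immaterial variation), a mean-value linearization, passage to the weak limit, and Lax--Milgram for well-posedness of \eqref{eq:linearized equation}. Two points are genuinely different. First, you use the integral (Bochner) form of the mean value theorem, $A_t=\int_0^1 h_\gamma'(D\bar u+s\,tDz_t)\,ds$, whereas the paper writes a pointwise intermediate value $h_\gamma'(\vartheta_t)$ and then passes to the limit via the explicit superposition operator $\Phi$ built from $\chi_\gamma$; your form is the cleaner one for the vector-valued map $h_\gamma$, and your dominated-convergence/weak--strong splitting replaces the paper's continuity-of-superposition argument to the same effect (you also correctly invoke Proposition \ref{lem:continuity of reg mapping} for $u_t\to\bar u$ strongly in $H_0^1(\Omega)$, where the paper points to Remark \ref{rem:properties VI}). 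Second, and more substantially, the final upgrade from weak to strong convergence of $z_t$ differs: the paper writes the linearized operator through the matrices $M$ and $M_t$ of \eqref{eq: matrix lin eq}, takes Cholesky factorizations, and follows the argument of Casas--Fern\'andez (and \cite{Delosreyes2009}), while you subtract the equations satisfied by $z_t$ and $z$, test with $z_t-z$, and use positive semidefiniteness of the averaged Jacobian $A_t$ and of $\bar\lambda_i B_{i,t}$ plus the same weak--strong splitting to show $\epsilon\|D(z_t-z)\|^2\to 0$. Your route is more self-contained and elementary; the paper's buys the conclusion by appeal to an established reference. Both arguments share the same unstated integrability hypotheses on $\phi_i''$ (delicate only for the Poisson fidelity, where one needs $\bar u$ bounded away from zero), which you rightly flag.
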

\begin{proof}
Existence and uniqueness of a solution to \eqref{eq:linearized equation} follows from Lax-Milgram theorem by making use of the monotonicity properties of $h_\gamma$ and $\phi_i'$.

Let $\xi \in X^d$, and let $y_t$ and $y$ be the unique solutions to \eqref{eq:reg. VI}
correspondent to $\lambda +t \xi$ and $\lambda$, respectively. By taking the difference between both equations, it follows that
\begin{multline}
\epsilon (D(u_t-u),D(u_t-u))+ \left( h_\gamma (\E u_t) -h_\gamma (\E u), \E (u_t-u)
\right)\\ +\lambda_t (\phi'(u_t)-\phi'(u),u_t-u) = -t (\xi \phi'(u), u_t-u),
\end{multline}
which, by the monotonicity of $h_\gamma$ and $\phi'$ yields that
\begin{equation}
\k\| u_t-u \|^2_{H_0^1} \leq t~\| \xi \|_{X^d} \|\phi'(u) \|_{L^r} \| u_t-u
\|_{L^p}.
\end{equation}
Therefore, the sequence $\{ z_t \}_{t>0}$, with $z_t:= \frac{y_t-y}{t}$, is
bounded and there exists a subsequence (denoted the same) such that $z_t
\weak z$ weakly in $H_0^1(\Omega)$.

Using the mean value theorem in integral form we get that
\begin{align}
a(z_t,w)&+\frac{1}{t} \left( h_\gamma (\E u_t) -h_\gamma (\E u), \E w
\right)+ \frac{1}{t} \left( \lambda_t (\phi'(u_t)- \phi'(u)),w \right)\\
&= a(z_t,w)+ \int \limits_\Omega \escu{h_\gamma' ( \vartheta_t) \E z_t}{\E w}~dx+ \int \limits_\Omega \lambda_t \phi''(\zeta_t) w ~dx \label{eq:derivative of reg. function in adjoint eq.}
\\&= -(\xi \phi'(u), w), \text{ for all
}w \in V,\label{eq:derivative of reg. function in adjoint eq. 2}
\end{align}
where $\vartheta_t(x)= \E u(x)+ \rho_t(x)(\E u_t(x) -\E u(x))$, with $0 \leq
\rho_t(x) \leq 1$, and $\zeta_t(x)= u(x)+ \varrho_t(x)(u_t(x) -u(x))$, with $0 \leq
\varrho_t(x) \leq 1$.

From the continuity of the bilinear form it follows that $a(z_t,w) \to
a(z,w)$, for all $w \in V$. Additionally, from the consistency of the regularization (see Remark \ref{rem:properties VI}), $u_t \to u$ strongly in $H_0^1(\Omega)$
and, therefore, $\vartheta_t \to \E u$ strongly in $\mathbb L^2(\Omega)$ and $\zeta_t \to u$ strongly in $H_0^1(\Omega)$.

Introducing the function
\begin{equation*}
\chi_\gamma(x) : =\begin{cases}
g &\text{ if } \gamma | x | \geq g +\frac{1}{2 \gamma},\\
g- \frac{\gamma}{2}(g-\gamma |x|+ \frac{1}{2 \gamma})^2 &\text{ if } \left| \gamma | x | -g \right| \leq \frac{1}{2 \gamma},\\
\gamma |x| &\text{ if } \gamma | x | \leq g -\frac{1}{2 \gamma},
\end{cases}
\end{equation*}
the regularizing function may be written as $h_{\gamma}(x)=\frac{x}{|x|} ~ \chi_\gamma(x)$ and 
the second term in \eqref{eq:derivative of reg. function in adjoint eq.} can be expressed as
\begin{multline}
\int \limits_\Omega \escu{h_{\gamma}' (\vartheta_t) \E z_t}{\E w}~dx= \int \limits_\Omega \chi_\gamma(\vartheta_t) \escu{\frac{\E w}{|\vartheta_t|}- \frac{\escu{\vartheta_t}{\E w}}{|\vartheta_t|^2}\frac{\vartheta_t}{|\vartheta_t|}}{\E z_t}~dx \nonumber
\\ \hspace{1cm}+ \int \limits_\Omega \chi_\gamma'(\vartheta_t)[\E w]
\escu{\frac{\vartheta_t}{|\vartheta_t|}}{\E z_t}~dx.
\end{multline}

Let $\Phi : \mathbb R^2 \mapsto \mathbb R^2$ be the operator defined by
\begin{equation} \label{eq:superposition operator Bingham 2}
\Phi(\xi):= \chi_\gamma(\xi) \frac{w}{|\xi|}- \frac{\escu{\xi}{w}}{|\xi|^2}\frac{\xi}{|\xi|} + \chi_\gamma'(\xi)[w]
\frac{\xi}{|\xi|},
\end{equation}
with $w \in \mathbb R^2$. When considered from $\mathbb L^q(\Omega)$ to $\mathbb
L^q(\Omega)$, $\Phi$ is a continuous superposition operator. Therefore, since $\vartheta_t \to \E u$ strongly in $\mathbb L^2(\Omega)$ and thanks to the weak
convergence of $z_t$ and continuity of $\phi''$, we may pass to the limit in \eqref{eq:derivative of reg. function in adjoint eq.}-\eqref{eq:derivative of reg. function in adjoint eq. 2} and obtain that
\begin{multline} \label{eq:proof limit satisfies lin eq}
 a(z,w)+ \int \limits_\Omega \escu{h_\gamma' ( \E u) \E z}{\E w}~dx \\= - \int \limits_\Omega \lambda \phi''(\zeta) w ~dx -\int \limits_\Omega \xi \phi'(u) ~w~dx, \text{ for all } w
\in H_0^1(\Omega).
\end{multline}
Consequently, $z \in  H_0^1(\Omega)$ corresponds to the unique solution of the linearized equation.

Using again the function $\chi_\gamma$, the operator on the left hand side of equation \eqref{eq:proof limit satisfies lin eq} may be written as $$\int_\Omega {\E w}^T M(x) {\E z} ~dx,$$ where
\begin{multline} \label{eq: matrix lin eq}
M(x):=2 \mu I + \frac{\chi_\gamma(\E y)}{|\E y|} I- \frac{\chi_\gamma(\E y)}{|\E y|^3} \left[ {\E y}(x) {\E y}(x)^T \right]\\+ \chi_{\mathcal S^\gamma}\frac{\gamma^2}{|\E y|^2}(g- \gamma |\E y| +\frac{1}{2 \gamma}) \left[ {\E y}(x) {\E y}(x)^T \right] +\chi_{\mathcal I^\gamma} \frac{\gamma}{|\E y|^2} \left[ {\E y}(x) {\E y}(x)^T \right],
\end{multline}
where $\chi_{\mathcal S^\gamma}$ and $\chi_{\mathcal I^\gamma}$ correspond to the indicator functions of the sets $\mathcal S^\gamma:=\{ x \in \Omega: \left| \gamma |\E y| - g \right| < \frac{1}{2 \gamma} \}$ and $\mathcal I^\gamma:=\{ x \in \Omega: \gamma |\E y| \leq g -\frac{1}{2 \gamma} \}$, respectively.

Similarly, by replacing ${\E y}$ with ${\vartheta_t}$ in \eqref{eq: matrix lin eq} a matrix denoted by $M_t$ is obtained. Both matrices $M$ and $M_t$ are symmetric and positive definite. Using
Cholesky decomposition we obtain lower triangular matrices $L_t$ and $L$ such that
$$M_t(x)=L_t(x) ~L_t^T(x) \hspace{0.3cm} \text{ and } \hspace{0.3cm} M(x)=L(x) ~L^T(x).$$
Proceeding as in \cite[pp.~30-31]{CasasFer1993} (see also \cite[Thm.~6.1]{Delosreyes2009}), strong convergence of $z_t \to z$ in $H_0^1(\Omega)$ is obtained, and, thus, also G\^ateaux differentiability of $G_\gamma$.
\end{proof}

\begin{theorem}[Optimality system] \label{thm:optimality system}
Let $(\bar \lambda, \bar u)$ be an optimal solution to problem \eqref{eq:reg. optimization problem} with $X^d=\mathbb R^d$. There exist Lagrange multipliers $(p,\mu) \in H_0^1(\Omega) \times \mathbb R^d$ such that the following optimality system holds:
\begin{subequations}
 \begin{equation} \label{eq:optimality condition state equation}
  \epsilon (D \bar u, Dv)+ (h_\gamma(D \bar u), D v) + \sum_{i=1}^d \int_\Omega \bar \lambda_i ~\phi_i'(\bar u)v ~dx =0, \forall v \in H_0^1(\Omega),
 \end{equation}
\begin{multline} \label{eq:optimality condition adjoint equation}
 \epsilon (D p, Dv)+ (h_\gamma'(D \bar u)^* D p, D v)\\ + \sum_{i=1}^d \int_\Omega \lambda_i ~\phi_i''(\bar u) ~p ~v ~dx =-(g'(\bar u),v), \forall v \in H_0^1(\Omega),
\end{multline}
\begin{equation}
 \mu_i= 2 \beta \bar \lambda_i + \int_\Omega p \phi_i'(\bar u)~dx, \hspace{1cm} i=1,...,d,
\end{equation}
\begin{equation} \label{eq:optimality condition complementarity system}
 \mu_i \geq 0, ~ \lambda_i \geq 0, ~\mu_i \lambda_i =0, \hspace{1cm} i=1,...,d.
\end{equation}
\end{subequations}
\end{theorem}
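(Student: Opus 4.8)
The plan is to reduce \eqref{eq:reg. optimization problem} to a variational inequality for the reduced cost functional and then use an adjoint state to eliminate the linearized solution. Throughout, $h_\gamma$ is understood as the $C^1$ regularization \eqref{eq:local reg. of q}, so that Proposition \ref{prop:linearisation} is available. Write $G_\gamma\colon \mathbb R^d \to H_0^1(\Omega)$ for the solution operator of \eqref{eq:reg. VI}, set $\bar u := G_\gamma(\bar\lambda)$, and introduce the reduced functional $\mathcal F(\lambda) := g(G_\gamma(\lambda)) + \beta \sum_{i=1}^d \lambda_i^2$ on the closed convex cone $C := \{\lambda \in \mathbb R^d : \lambda_i \geq 0,~i=1,\dots,d\}$. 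Since $g$ is $C^1$ and $G_\gamma$ is G\^ateaux differentiable by Proposition \ref{prop:linearisation}, the chain rule yields that $\mathcal F$ is G\^ateaux differentiable with $\mathcal F'(\bar\lambda)\xi = (g'(\bar u), z_\xi)_{L^2} + 2\beta \sum_{i=1}^d \bar\lambda_i \xi_i$, where $z_\xi := G_\gamma'(\bar\lambda)\xi$ is the unique solution of the linearized equation \eqref{eq:linearized equation} at $\bar u$. Because $\bar\lambda$ minimizes $\mathcal F$ over the convex set $C$, it satisfies the first-order condition $\mathcal F'(\bar\lambda)(\lambda-\bar\lambda)\geq 0$ for all $\lambda\in C$; and \eqref{eq:optimality condition state equation} is simply the constraint \eqref{eq:regeps. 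VI} evaluated at the optimal pair.

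Next I would introduce the adjoint state $p\in H_0^1(\Omega)$ as the solution of \eqref{eq:optimality condition adjoint equation}, which is precisely the variational equation associated with the transpose of the bilinear form appearing in \eqref{eq:linearized equation}. Existence and uniqueness of $p$ follow from the Lax--Milgram theorem: the principal part is governed by the matrix field $M(x)$ from \eqref{eq: matrix lin eq}, which is symmetric, uniformly bounded and positive definite (this is exactly what the $C^1$ choice \eqref{eq:local reg. of q} guarantees), the zeroth-order term is nonnegative since each $\phi_i$ is convex so $\phi_i''\geq 0$, and the source $g'(\bar u)\in L^2(\Omega)$ is an admissible element of $H^{-1}(\Omega)$. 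Testing \eqref{eq:linearized equation} (with the direction $\xi$) by $p$ and testing \eqref{eq:optimality condition adjoint equation} by $z_\xi$, the principal and zeroth-order contributions cancel by symmetry of $M$ together with the definition of the adjoint $h_\gamma'(D\bar u)^*$, and subtracting the two identities leaves $(g'(\bar u), z_\xi)_{L^2} = \sum_{i=1}^d \xi_i \int_\Omega p\,\phi_i'(\bar u)\,dx$. Substituting this into the expression for $\mathcal F'(\bar\lambda)$ and defining $\mu_i := 2\beta\bar\lambda_i + \int_\Omega p\,\phi_i'(\bar u)\,dx$ gives $\mathcal F'(\bar\lambda)\xi = \sum_{i=1}^d \mu_i\xi_i$, which is the third relation of the claimed system.

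Finally, it remains to translate the variational inequality $\sum_{i=1}^d \mu_i(\lambda_i - \bar\lambda_i)\geq 0$, valid for all $\lambda\in C$, into the complementarity conditions \eqref{eq:optimality condition complementarity system}. Fixing all components except the $i$-th at $\bar\lambda_j$ and letting $\lambda_i\geq 0$ vary yields $\mu_i(\lambda_i-\bar\lambda_i)\geq 0$ for every $\lambda_i\geq 0$. Letting $\lambda_i\to +\infty$ forces $\mu_i\geq 0$, while the choice $\lambda_i=0$ gives $\mu_i\bar\lambda_i\leq 0$; since $\mu_i\geq 0$ and $\bar\lambda_i\geq 0$ by feasibility, we conclude $\mu_i\bar\lambda_i = 0$. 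This establishes the full optimality system.

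I expect the main obstacle to be the rigorous handling of the adjoint equation and the cancellation step: verifying that $M(x)$ in \eqref{eq: matrix lin eq} is uniformly bounded and coercive so that Lax--Milgram applies, and that the identity $(h_\gamma'(D\bar u)Dz_\xi, Dp) = (h_\gamma'(D\bar u)^*Dp, Dz_\xi)$ is legitimate in $L^2$. The $C^1$ regularization \eqref{eq:local reg. of q} is what makes $h_\gamma'$ a genuine bounded matrix field, so this part is largely inherited from the estimates already carried out in the proof of Proposition \ref{prop:linearisation}; the remaining ingredients — the chain rule for the reduced functional, the first-order condition on a convex set, and the complementarity argument — are routine.
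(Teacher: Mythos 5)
Your proposal is correct and follows essentially the same route as the paper's proof: the reduced functional $f(\lambda)=g(G_\gamma(\lambda))+\beta\|\lambda\|^2$, its first-order condition on the cone $\{\lambda\geq 0\}$, the adjoint state defined via Lax--Milgram, the duality pairing of the linearized and adjoint equations to obtain $\nabla f(\bar\lambda)^T\xi=2\beta\bar\lambda^T\xi+\sum_i\xi_i\int_\Omega\phi_i'(\bar u)\,p\,dx$, and the equivalence of the resulting variational inequality with the complementarity system. Your explicit spelling-out of the symmetry cancellation and of the complementarity argument only fills in steps the paper leaves implicit.
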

\begin{proof}
Consider the reduced cost functional
\begin{equation}
f(\lambda)=g(G_\gamma(\lambda))+ \beta \| \lambda \|_{\mathbb{R}^d}^2.
\end{equation}
Thanks to the optimality of $\bar \lambda$ and the differentiability of both $G_\gamma$ and $g$ it follows that
\begin{equation} \label{eq:optimality condition variational ineq}
\nabla f(\bar \lambda)^T(\xi- \bar \lambda) \geq 0, \text{ for all } \xi \geq 0.
\end{equation}
Let $p \in H_0^1(\Omega)$ be the unique solution to the adjoint equation:
\begin{multline} \label{eq:adjoint equation}
 \epsilon (D p, Dv)+ (h_\gamma'(D \bar u)^* D p, D v)\\ + \sum_{i=1}^d \int_\Omega \lambda_i ~\phi_i''(\bar u) ~p ~v ~dx =-(g'(\bar u),v), \forall v \in H_0^1(\Omega).
\end{multline}
Indeed, existence and uniqueness of a solution to \eqref{eq:adjoint equation} follows from the Lax-Milgram theorem, similarly as for the linearized equation.

Using the adjoint equation it follows that
\begin{align*}
\nabla f(\bar \lambda)^T \xi &= (g'(\bar u),G_\gamma'(\bar u)\xi)+ 2 \beta \bar \lambda^T \xi\\
&= 2 \beta \bar \lambda^T \xi - \epsilon (D p, Dv)- (h_\gamma'(D \bar u)^* D p, D v) - \sum_{i=1}^d \int_\Omega \lambda_i ~\phi_i''(\bar u) ~p ~v ~dx,
\end{align*}
which, utilizing the linearized equation \eqref{eq:linearized equation}, yields that
\begin{equation} \label{eq: characterization of gradient}
\nabla f(\bar \lambda)^T \xi =2 \beta \bar \lambda^T \xi + \sum_{i=1}^d \int_\Omega \phi_i'(\bar u) p ~\xi_i.
\end{equation}
Let $\mu_i:= 2 \beta \bar \lambda_i + \int_\Omega \phi_i'(\bar u) p,
~i=1,...,d.$ From \eqref{eq:optimality condition variational ineq} and \eqref{eq: characterization of gradient} it then follows that
$$\mu^T(\xi- \bar \lambda) \geq 0, \text{ for all } \xi \geq 0,$$
which is equivalent to the complementarity system \eqref{eq:optimality condition complementarity system}.
\end{proof}

\begin{remark}
In the case of a general parameter Hilbert space $X^d$, an optimality system
constituted by equations \eqref{eq:optimality condition state equation},
\eqref{eq:optimality condition adjoint equation} and the variational
inequality $$
2 \beta( \bar \lambda, \xi- \bar \lambda)_{X^d}  + ( \phi'(\bar u) p,
\xi- \bar \lambda)_{\mathbb L^2} \geq 0, \text{ for all } \xi \in X^d
: \xi \geq 0 \text{ a.e.}$$ is
obtained.
\end{remark}

\section{Numerical solution of the optimization problem}
In this section we focus on the numerical solution of the optimization
problem \eqref{eq:optimization problem}. For the determination of the
optimal parameter values we consider a projected
BFGS (Broyden-Fletcher-Goldfarb-Shanno) method. In
each computational experiment, the state equation is solved by means of a Newton type
algorithm (specified in each case), while a forward finite differences
quotient is used for the evaluation of the gradient of the cost
functional. Due to the high computational cost of solving the state
equation, a fixed line search parameter value $\alpha=0.5$ was considered in
combination with the BFGS method. The linear systems in each Newton iteration are solved
exactly by using a LU decomposition for band matrices. 

\subsection{Gaussian noise}
As a first example we consider the determination of a single
regularization parameter, i.e., $\lambda \in \mathbb R$. The optimization problem takes the following form:
\begin{subequations} \label{eq:gaussian noise problem}
\begin{equation}
\min  ~\frac{1}{2} \|u - u_{o}\|^2_{L^2} + \beta \lambda^2 
\end{equation}
subject to:
\begin{multline}
\varepsilon (Du,D(v-u))_{L^2} +\int_{\Omega} \lambda (u-u_{n}) (v-u) ~dx\\+ \int_{\Omega} |Dv| ~dx-\int_{\Omega}  |Du| ~dx \geq 0, \forall v \in H_0^1(\Omega),
\end{multline}
\end{subequations}
where $u_o$ and $u_n$ denote the original and noisy images respectively. The problem consists therefore in the optimal choice of the TV regularization parameter, if the original image is known in advance. This is a toy example for proof of concept only. In practice this image would be replaced by a training set of images as motivated in the Introduction. 

For the numerical solution of the regularized variational inequality we utilize the
primal-dual algorithm developed in \cite{HintermuellerStadler}, which
was proved to be globally as well as locally superlinear convergent.

The results for the parameter values $\beta= 1 \times 10^{-10}, \varepsilon = 1 \times 10^{-12}, \gamma=100$ and $h=1/177$ are shown in Figure \ref{fig:experiment1} together with the noisy image distorted by Gaussian noise with zero mean and variance 0.002. 
\begin{figure}
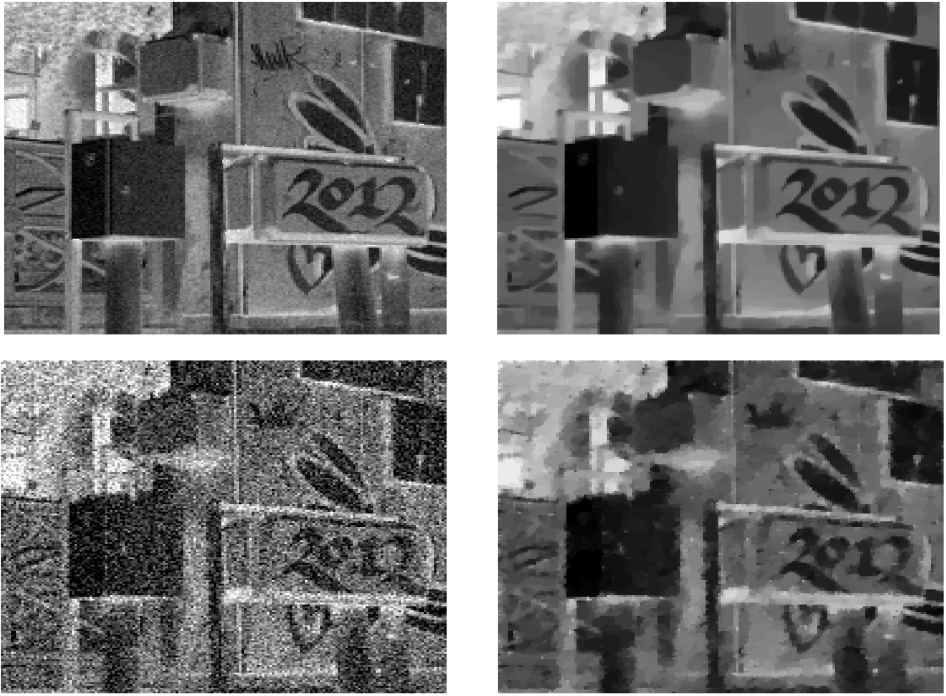
 \label{fig:experiment1}
\begin{center}
\includegraphics[width=6.37cm]{noised1}
\includegraphics[width=6.37cm]{denoised1}\\
\hspace{0.01cm} \includegraphics[height=4.65cm,width=6.15cm]{noised2} \hspace{0.15cm} \includegraphics[height=4.65cm,width=6.15cm]{denoised2}
\end{center}
\caption{Noisy (left) and denoised (right) images. Noise variance:
  0.002 (first row) and 0.02 (last row).}
\end{figure}
The computed optimal parameter for this problem is $\lambda^*=2980$
For a noise of mean 0 and variance 0.02, and the same regularization
parameters as in the previous experiment, the optimal image is
obtained with the weight $\lambda^* =1770.9$. The noisy and denoised
images are given in Figure \ref{fig:experiment1}.

By increasing the variance in the noise, the optimal values of the
weight differ significantly. This is intuitively clear, since as the
image becomes noisier there is less original information that can be directly
obtained. When that happens, the TV regularization plays an
increasingly important role.

The question of robustness of the optimal parameter value deserves
also to be tested. In Table \ref{table: mesh dependence} we compute
the optimal weight for different sources of the noisy image (different
total number of pixels). The value
of the optimal weight increases together with the size of the image
from which the information is obtained. The variation remains however
small, implying a robust behavior of the values. 
\begin{table}
\centering
\begin{tabular}{|c|c|c|c|c|c|c|} \hline
$\hspace{.2cm} \# \text{pixels} \hspace{.2cm}$ & \hspace{.2cm} 60 \hspace{.2cm} &
\hspace{.2cm} 65 \hspace{.2cm} & \hspace{.2cm} 70 \hspace{.2cm} & \hspace{.2cm}
75 \hspace{.2cm} &\hspace{.2cm} 80 \hspace{.2cm} &\hspace{.2cm} 85
\hspace{.2cm}\\
\hline $\lambda^* $ &674.6 &742.9 & 788.9 &855.0 &885.8 &933\\
\hline
\end{tabular}
\vspace{0.2cm}\caption{Optimal weight vs. mesh size; $\mu=1e-15$, $\gamma=100$, $\beta=1e-10$.} \label{table: mesh dependence}
\end{table}

\subsection{Magnetic resonance imaging}
Gaussian noise images typically arise within the framework of magnetic
resonance imaging (MRI). The challenge in this case consists in
training the machines so that a clearer image is obtained. The magnetic
resonance images seem to be the natural choice for our methodology, since a training set of
images is often at hand.

For such a training set we consider the solution of problem \eqref{eq:gaussian noise problem}. In Figure \ref{fig:experimentmri} the noisy images together with the final optimized ones for a brain scan are shown. For this experiments a mesh step size of $h=1/250$ was considered. The Tikhonov regularization parameter took the value $\beta=10^{-10}$, while the Huber regularization parameter was chosen as $\gamma=100$.
\begin{figure}
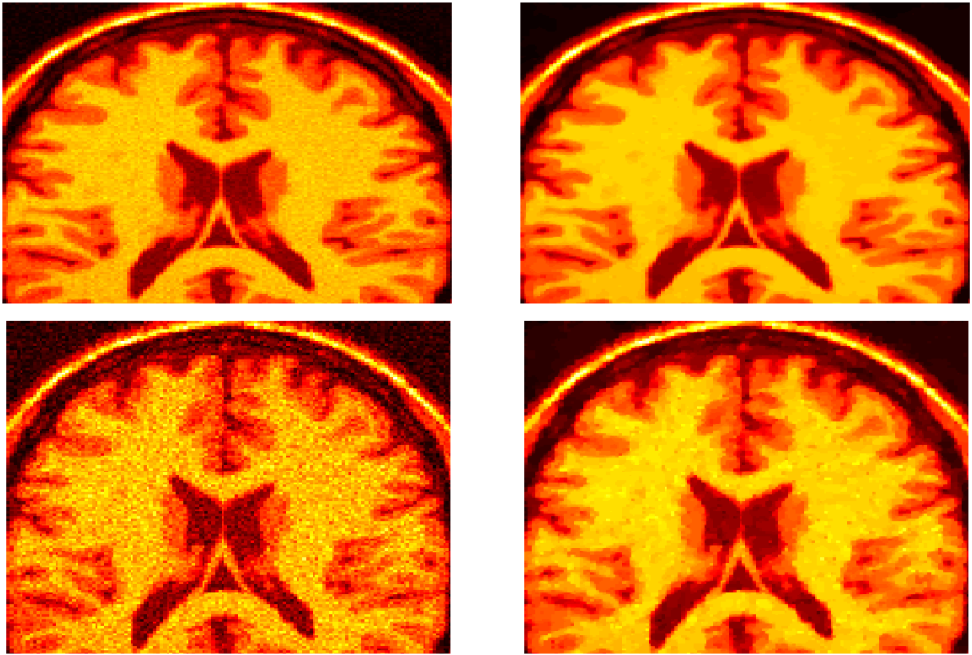
 \label{fig:experimentmri}
\begin{center}
\includegraphics[width=6.2cm]{noised-mri-hot}
\hfill \includegraphics[width=6.2cm]{denoised-mri-hot} \\
\includegraphics[width=6.2cm]{noised-mri-hot-2} \hfill \includegraphics[width=6.2cm]{denoised-mri-hot-2} 
\end{center}
\caption{Image with 3\% noise (upper left) and its correspondent optimal denoised one (upper right); noisy image with 9\% noise (lower left) and optimal denoised image (lower right)}
\end{figure}
With this values, the optimal parameter value for the MRI image with 3\% of noise was $\lambda^*=64.1448$. When the noise in the image was of 9\%, the computed optimal weight was $\lambda^*=26.7110$.



\subsection{Training objective- multiple parameters (Gauss+Poisson)}
The importance of controlling the regularization in a denoising
problem becomes clear once two different noise distributions, modeled by fidelity terms weighted by non-negative parameters $\lambda_1$ and $\lambda_2$, are present in an image. In particular, in the following experiment we shall solve the optimization problem
$$
\min_{\lambda \geq 0}  ~\frac{1}{2} \|u - u_{o}\|^2_{L^2} + \beta \sum_{i=1}^2 \|\lambda_i\|^2
$$
subject to: 
\begin{equation}\label{gausspoissonorg}
\min_{u\geq 0} \left\{\frac{\varepsilon}{2} \|Du\|^2_{L^2} + |Du|(\Omega) + \frac{\lambda_1}{2} \|u-u_{n}\|^2_{L^2} + \lambda_2 \int_\Omega (u-u_{n}\log u)~ dx\right\}.
\end{equation}
For the characterization of a minimizer of \eqref{gausspoissonorg} we (formally) get the following Euler-Lagrange equation 
\begin{align*}
& -\epsilon\Delta u - \mathrm{div}\left(\frac{\gamma\nabla u}{\max(\gamma|\nabla u|, 1)} \right) +\lambda_1 (u-u_n) + \lambda_2 (1-\frac{u_n}{u})  -\alpha = 0\\
& \alpha\cdot u = 0,
\end{align*}
with non-negative Lagrange multiplier $\alpha \in L^2(\Omega)$, cf. \cite{HintermuellerKunisch2009}. As in \cite{SBMB09} we multiply the first equation with $u$ and get
$$
u\cdot \left(-\epsilon\Delta u - \mathrm{div}\left(\frac{\gamma\nabla u}{\max(\gamma|\nabla u|, 1)} \right) +\lambda_1 (u-u_n) \right) + \lambda_2(u-u_n) = 0,
$$
where we have used the complementarity condition $\alpha\cdot u =0$. Next, the solution $u$ is computed iteratively by using a Newton type method.

For an appropriate initial guess $u^0$, an iteration of the semismooth Newton method consists in solving the system
\begin{align}
\delta_u \left(-\epsilon \Delta u - \mathrm{div} q +\lambda_1 (u -u_n) \right) &+u \left(-\epsilon \Delta \delta_u - \mathrm{div} \delta_q +\lambda_1 \delta_u \right)\\ \nonumber + \lambda_2 \delta_u = - &u \left(-\epsilon \Delta u - \mathrm{div} q +\lambda_1 (u -u_n) \right)- \lambda_2(u-u_n),\\
\delta_q - \frac{\gamma \nabla \delta_u}{\max(1,\gamma|\nabla u|)}+ \chi_{\mathcal A_\gamma}  \gamma^2 & \frac{\nabla u^T \nabla \delta_u}{\max(1,\gamma|\nabla u|)^2} \frac{\nabla u}{|\nabla u|}=-q+ \frac{\gamma \nabla u}{\max(1,\gamma|\nabla u|)}, \label{eq:numerics gauss posson ssn iter 2}
\end{align}
for the increments $\delta_u$ and $\delta_q$. In equation \eqref{eq:numerics gauss posson ssn iter 2}, $\chi_{\mathcal A_\gamma}$ stands for the indicator function of the active set $\mathcal A_\gamma := \{ x \in \Omega: \gamma |\nabla u(x)| \geq 1 \}$.

Similarly to \cite{Delosreyes2011}, we consider a modification of the iteration based on the properties of the solution to \eqref{gausspoissonorg}. Specifically, noting that $q=\frac{\nabla u}{|\nabla u|}$ on the final active set and that $|q| \leq 1$, we replace the term $\frac{\nabla u}{|\nabla u|}$ by $\frac{q}{\max(1, \gamma |\nabla u|)}$ on the left hand side of the iteration system. The resulting iteration is then given by:
\begin{align}
\delta_u \left(-\epsilon \Delta u - \mathrm{div} q +\lambda_1 (u -u_n) \right) &+u \left(-\epsilon \Delta \delta_u - \mathrm{div} \delta_q +\lambda_1 \delta_u \right)\\ \nonumber + \lambda_2 \delta_u = - &u \left(-\epsilon \Delta u - \mathrm{div} q +\lambda_1 (u -u_n) \right)- \lambda_2(u-u_n),\\
\delta_q - \frac{\gamma \nabla \delta_u}{\max(1,\gamma|\nabla u|)}+ \chi_{\mathcal A_\gamma}   & \gamma^2 \frac{\nabla u^T \nabla \delta_u}{\max(1,\gamma|\nabla u|)^2} \frac{q}{\max(1,|q|)}=-q+ \frac{\gamma \nabla u}{\max(1,\gamma|\nabla u|)}, \label{eq:numerics gauss posson modif ssn iter 2}
\end{align}
The resulting algorithm exhibits global and local superlinear
convergence properties. In Figure \ref{fig:residuumpoissongauss} the
residuum of the algorithm in the last 4 iterations is
depicted. The parameter values used are $\mu=1e-4, \gamma=50, \beta=1e-10, h=1/60, \lambda_1=769.2199,
\lambda_2=30.6396$. From the behavior of the
residdum, local superlinear convergence is inferred. 
\begin{figure} \label{fig:residuumpoissongauss}
\begin{center}
\includegraphics[height=4.5cm,width=5.5cm]{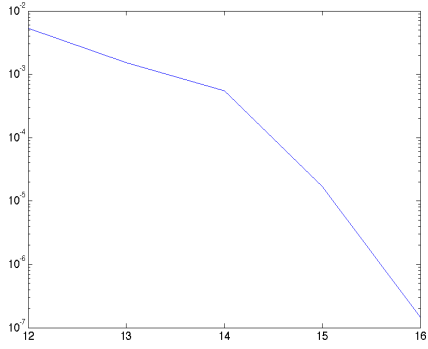} 
\end{center}
\caption{SSN residuum in the last 4
  iterations. $\mu=1e-4, \gamma=50, \beta=1e-10, h=1/60, u_1=769.2199,
u_2=30.6396$}
\end{figure}
In combination with the outer BFGS iteration, a competitive algorithm for the solution of the bilevel problem is obtained.

For the computational tests we consider the noisy zoomed image of a plane's wing (see Figure 4.4). Choosing the parameter values $\beta=1e-10, ~ \gamma =100$ and $\epsilon=1e-15$, the optimal weights $\lambda_1^*=1847.75$ and $\lambda_2^*=73.45$ were computed on a grid with mesh size step $h=1/200$. From Figure \ref{fig:experimentpoissongauss} also a good match between the original and the denoised images can be observed. The noise appears to be succesfully removed with the computed optimal weights.
\begin{figure}
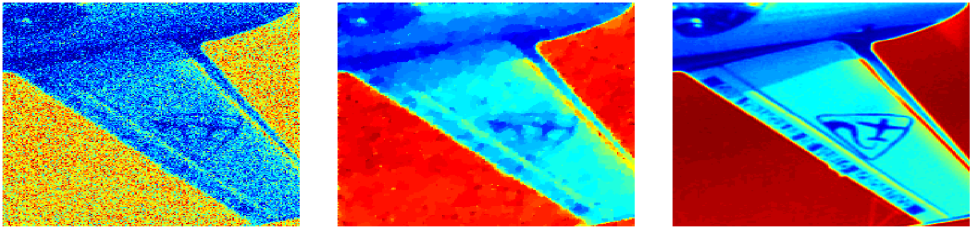
 \label{fig:experimentpoissongauss}
\begin{center}
\includegraphics[width=4.2cm]{wing_noised} \hfill \includegraphics[width=4.2cm]{wing_denoised} \hfill \includegraphics[width=4.2cm]{wing_original}
\end{center}
\caption{Noisy (left), denoised (center) and original (right) images}
\end{figure}

Further, we tested the stability of the optimal parameter values with respect to changes in the source noisy image. This behavior is registered in Table \ref{table: sensitivity}. On the first data column the optimal weights corresponding to a zoomed image displaced by 10 grid points down and 10 grid points left is registered. The same kind of data is registered in the second column for a displacement of 5 grid points down and 5 left. In the fourth column the data corresponds to a displacement in the zoom by 5 grid points right and 5 up. Similarly, in the fifth column for 10 grid points. From Table \ref{table: sensitivity} a robust behavior of the parameter values can be inferred. Indeed, by changing the source image by 10\%, the optimal weights change less than 16\%.
\begin{table}
\centering
\begin{tabular}{|c|c|c|c|c|c|} \hline
$\hspace{.2cm}$ \%Displacement $\hspace{.2cm}$ & \hspace{.2cm} -10 \hspace{.2cm} &
\hspace{.2cm} -5 \hspace{.2cm} & \hspace{.2cm} 0 \hspace{.2cm} & \hspace{.2cm}
5 \hspace{.2cm} &\hspace{.2cm} 10 \hspace{.2cm} \\
\hline \% Sensitivity of $\lambda^*$ &15.56 &0.3 &0 &0.15 &8.26\\
\hline $\lambda_1^*$ &1270.9  &1103.4  & 1100.7 &1099.8 &1189.8\\
\hline $\lambda_2^*$ &26.39 &48.24 &46.37 &44.92 &28.13\\
\hline
\end{tabular}
\vspace{0.2cm}\caption{Optimal weight sensitivity by moving the sample image along the diagonal; $\mu=1e-12$, $\gamma=50$, $\beta=1e-10$ $h=1/100$.} \label{table: sensitivity}
\end{table}

\subsection{Impulse noise}
For the last experiment we consider images with so-called impulse noise. Specifically, we aim to solve the following parameter
estimation problem:
\begin{equation}
\min  ~\frac{1}{2} \|u - u_{o}\|^2_{L^2} + \beta \lambda^2 
\end{equation}
subject to:
\begin{multline} \label{eq:impulseVI}
\varepsilon (Du,D(v-u))_{L^2} +\lambda \int_{\Omega} |v-u_n| ~dx-\lambda \int_{\Omega} |u-u_n| ~dx\\+ \int_{\Omega} |Dv| ~dx-\int_{\Omega}  |Du| ~dx \geq 0, \forall v \in H_0^1(\Omega).
\end{multline}
Equation \eqref{eq:impulseVI} corresponds to the necessary and
sufficient optimality condition for the optimization problem:
\begin{equation}
\min_{u} \left\{\frac{\varepsilon}{2} \|Du\|^2_{L^2} + |Du|(\Omega) +
  \lambda \int_\Omega |u-u_{n} | ~dx \right\}.
\end{equation}
The $L^1$-norm is introduced to deal with the sparse impulse noise in
the image. The presence of this norm adds, however, an additional
nondifferentiability to the optimization problem.

For the numerical solution of the lower level problem we consider a
Huber type regularization of both the TV term and the
$L^1$-norm. Using a common regularization parameter $\gamma$, the
resulting nonlinear PDE takes the following form:
\begin{equation} \label{eq: impulse PDE}
-\epsilon\Delta u - \mathrm{div}\left(\frac{\gamma\nabla u}{\max(\gamma|\nabla u|, 1)}\right)  +\lambda \frac{\gamma (u-u_n)}{\max(1, \gamma |u-u_n|)}  = 0,
\end{equation}
or, in primal-dual form,
\begin{align}
-& \epsilon\Delta u - \mathrm{div}~q  +\lambda ~p  = 0,\\
& q=\frac{\gamma\nabla u}{\max(\gamma|\nabla u|, 1)},\\
& p=\frac{\gamma (u-u_n)}{\max(1, \gamma |u-u_n|)}.
\end{align}

The nonlinearities in equation \eqref{eq: impulse PDE} are present both in the quasilinear
and the semilinear terms. Both of them have to be be carefully treated
in order to obtain a convergent numerical method for the solution.

Proceeding in a similar manner as in Section 5.2, a semismooth Newton iteration for the impulse noise lower level problem is given by
\begin{align}
- \epsilon \Delta \delta_u - \mathrm{div} \delta_q +\lambda \delta_p =& -  \left(-\epsilon \Delta u - \mathrm{div} q +\lambda p \right),\\
\delta_q - \frac{\gamma \nabla \delta_u}{\max(1,\gamma|\nabla u|)}+ & \chi_{\mathcal A_\gamma} \frac{\gamma^2 \nabla u^T \nabla \delta_u}{\max(1,\gamma|\nabla u|)^2} \frac{\nabla u}{|\nabla u|}=-q+ \frac{\gamma \nabla u}{\max(1,\gamma|\nabla u|)}, \label{eq:numerics impulse ssn iter 2}\\
\delta_p - \frac{\gamma \delta_u}{\max(1,\gamma| u-u_n|)} &+  \chi_{\mathcal S_\gamma}  \frac{\gamma^2  (u-u_n) \delta_u}{\max(1,\gamma|u-u_n|)^2} \frac{(u-u_n)}{|u-u_n|} \nonumber \\ & \hspace{3cm}=-p+ \frac{\gamma (u-u_n)}{\max(1,\gamma| u-u_n|)}.
\end{align}

Using a similar argumentation as for the Gauss+Poisson noise case, we consider the modified system
\begin{align}
-& \epsilon \Delta \delta_u - \mathrm{div} \delta_q +\lambda \delta_p = -  \left(-\epsilon \Delta u - \mathrm{div} q +\lambda p \right), \label{eq:numerics impulse modif ssn iter 1}\\
&\delta_q - \frac{\gamma \nabla \delta_u}{\max(1,\gamma|\nabla u|)}+ \chi_{\mathcal A_\gamma} \gamma^2 \frac{\nabla u^T \nabla \delta_u}{\max(1,\gamma|\nabla u|)^2} \frac{q}{\max(1,|q|)}\nonumber \\ & \hspace{6cm} =-q+ \frac{\gamma \nabla u}{\max(1,\gamma|\nabla u|)}, \label{eq:numerics impulse modif ssn iter 2}\\
& \delta_p - \frac{\gamma \delta_u}{\max(1,\gamma| u-u_n|)}+
\chi_{\mathcal S_\gamma}  \frac{\gamma^2  (u-u_n)
  \delta_u}{\max(1,\gamma|u-u_n|)^2} \frac{p}{\max(1,|p|)} \nonumber
\\ & \hspace{6cm}  =-p+ \frac{\gamma (u-u_n)}{\max(1,\gamma|
  u-u_n|)}.  \label{eq:numerics impulse modif ssn iter 3}
\end{align}
where we replaced the terms $\frac{\nabla u}{|\nabla u|}$ and $\frac{(u-u_n)}{|u-u_n|}$ on the left hand side by $\frac{q}{\max(1,|q|)}$ and $\frac{p}{\max(1,|p|)}$, respectively.

The behavior of the resulting BFGS-SSN algortihm is registered in
Table \ref{table: impulse}. For the parameter values $\mu=1e-12$,
$\gamma=50$, $\beta=1e-10$ $h=1/40$ the algorithm takes 12 iterations
to converge. The number of iterations of the lower level algorithm,
given through  \eqref{eq:numerics impulse modif ssn iter 1}-
\eqref{eq:numerics impulse modif ssn iter 3}, is registered in the
last column, from which the fast convergence of the method is
experimentally verified.
\begin{table}
\centering
\begin{tabular}{|c|c|c|c|c|} \hline
$\hspace{.2cm}$ Iteration $\hspace{.2cm}$ & \hspace{.2cm} $\lambda^*$ \hspace{.2cm} & \hspace{.2cm}
Cost functional \hspace{.2cm} & \hspace{.2cm} Residuum \hspace{.2cm} &
\hspace{.2cm} \#SSN iterations \hspace{.2cm} \\
\hline 1 &10.0015 &0.0124 &0.0015 &15 \\
\hline 2 &16.7752  &0.0124 &0.0015  &3 \\
\hline 3 &19.2223 &0.0064 &4.024e-4 &14 \\
\hline 4 &10.0854 &0.0048 &5.496e-4 &12\\
\hline 5 &24.9562 &0.0123 &0.0014 &17\\
\hline 6 &26.2300 &0.0018 &1.124e-4 &18\\
\hline 7 &30.2286 &0.0017 &8.530e-5 &10\\
\hline 8 &45.8756 &0.0013 &6.794e-5 &12\\
\hline 9 &48.7340 &7.83e-4 &1.049e-5 &13\\
\hline 10 &73.2269 &7.55e-4 &9.397e-6 &7\\
\hline 11 &57.9833 &8.12e-4 &1.54e-5 &12\\
\hline 12 &58.2922 &6.81e-4 &3.20e-7 &19\\
\hline
\end{tabular}
\vspace{0.2cm}\caption{Optimal weight sensitivity by moving the sample image along the diagonal; $\mu=1e-12$, $\gamma=50$, $\beta=1e-10$ $h=1/40$.} \label{table: impulse}
\end{table}

\begin{figure}
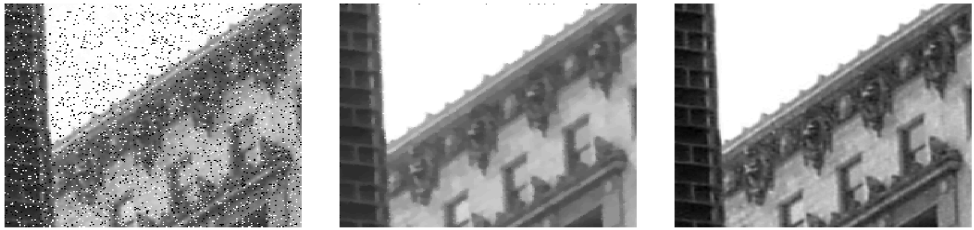
 \label{fig:experimentimpulse}
\begin{center}
\includegraphics[width=4.2cm]{impulsenoisedgray} \hfill \includegraphics[width=4.2cm]{impulsedenoisedgray} \hfill \includegraphics[width=4.2cm]{impulseoriginalgray}
\end{center}
\caption{Noised (left), de-noised (center) and original (right) images, $\gamma=50,~\epsilon=1e-12,~h=1/200$}
\end{figure}

\end{document}